\newtheorem{theorem}{Theorem}
\newtheorem{corollary}[theorem]{Corollary}
\newtheorem{lemma}[theorem]{Lemma}
\newtheorem{conjecture}[theorem]{Conjecture}
\newtheorem{claim}[theorem]{Claim}
\theoremstyle{definition}
\titleformat{\section}[hang]{\scshape\large\bfseries\filcenter}{\S\thesection}{4pt}{}
\titleformat{\subsection}[hang]{\scshape\bfseries}{\thesubsection.}{4pt}{}
\newcommand{\tss}[1]{\textsuperscript{#1}}
\newcommand{\on}[1]{
	\operatorname{#1}
}
\newcommand\mder{{\Delta\!\!\!\!\!\hbox{\raisebox{0.3ex}{\tiny\ \textbullet}}}\,}
\newcommand{\tightoverset}[2]{
  \mathop{#2}\limits^{\vbox to -.5ex{\kern-1.15ex\hbox{$#1$}\vss}}}
\newcommand\restr[2]{{
  \left.\kern-\nulldelimiterspace 
  #1 
  \vphantom{\big|} 
  \right|_{#2} 
}}
\newcommand{\subalign}[1]{%
  \vcenter{%
    \Let@ \restore@math@cr \default@tag
    \baselineskip\fontdimen10 \scriptfont\tw@
    \advance\baselineskip\fontdimen12 \scriptfont\tw@
    \lineskip\thr@@\fontdimen8 \scriptfont\thr@@
    \lineskiplimit\lineskip
    \ialign{\hfil$\m@th\scriptstyle##$&$\m@th\scriptstyle{}##$\hfil\crcr
      #1\crcr
    }%
  }%
}
\newcommand\blfootnote[1]{%
  \begingroup
  \renewcommand\thefootnote{}\footnote{#1}%
  \addtocounter{footnote}{-1}%
  \endgroup
}
\newcommand\ex{\mathop{\mathbb{E}}}
\newcommand{\exx}{
  \mathop{
    \mathchoice{\vcenter{\hbox{\larger[4]$\mathbb{E}$}}}
               {\kern0pt\mathbb{E}}
               {\kern0pt\mathbb{E}}
               {\kern0pt\mathbb{E}}
  }\displaylimits
}
\newcommand*\bcdot{\mathpalette\bigcdot@{0.5}}
\newcommand*\bigcdot@[2]{\mathbin{\vcenter{\hbox{\scalebox{#2}{$\m@th#1\bullet$}}}}}
\def\blfootnote{\gdef\@thefnmark{}\@footnotetext}
\newcommand\id{\mathbbm{1}}
\begin{document}
\begin{center}\Large\noindent{\bfseries{\scshape Approximately Symmetric Forms Far From Being Exactly Symmetric}}\\[24pt]\normalsize\noindent{\scshape Luka Mili\'cevi\'c\dag}\\[6pt]
\end{center}
\blfootnote{\noindent\dag\ Mathematical Institute of the Serbian Academy of Sciences and Arts\\\phantom{\dag\ }Email: luka.milicevic@turing.mi.sanu.ac.rs}

\footnotesize
\begin{changemargin}{1in}{1in}
\centerline{\sc{\textbf{Abstract}}}
\phantom{a}\hspace{12pt}~Let $V$ be a finite-dimensional vector space over $\mathbb{F}_p$. We say that a multilinear form $\alpha \colon V^k \to \mathbb{F}_p$ in $k$ variables is $d$-\emph{approximately symmetric} if the partition rank of difference $\alpha(x_1, \dots, x_k) - \alpha(x_{\pi(1)}, \dots, x_{\pi(k)})$ is at most $d$ for every permutation $\pi \in \on{Sym}_k$. In a work concerning the inverse theorem for the Gowers uniformity $\|\cdot\|_{\mathsf{U}^4}$ norm in the case of low characteristic, Tidor conjectured that any $d$-approximately symmetric multilinear form $\alpha \colon V^k \to \mathbb{F}_p$ differs from a symmetric multilinear form by a multilinear form of partition rank at most $O_{p,k,d}(1)$ and proved this conjecture in the case of trilinear forms. In this paper, somewhat surprisingly, we show that this conjecture is false. In fact, we show that approximately symmetric forms can be quite far from the symmetric ones, by constructing a multilinear form $\alpha \colon \mathbb{F}_2^n \times \mathbb{F}_2^n \times \mathbb{F}_2^n \times \mathbb{F}_2^n \to \mathbb{F}_2$ which is 3-approximately symmetric, while the difference between $\alpha$ and any symmetric multilinear form is of partition rank at least $\Omega(\sqrt[3]{n})$. 
\end{changemargin}
\normalsize
\section{Introduction}

\hspace{18pt}For a function $f \colon G \to \mathbb{C}$ on a finite abelian group $G$, Gowers uniformity norms $\|\cdot\|_{\mathsf{U}^k}$, introduced by Gowers in~\cite{TimSze} are given by the formula
\[\|f\|_{\mathsf{U}^k}^{2^k} = \exx_{a_1, \dots, a_k, x \in G} \mder_{a_1} \dots \mder_{a_k}f(x),\]
where $\mder_a$ stands for the discrete multiplicative derivative operator defined by $\mder_a f(x) = f(x + a) \overline{f(x)}$. Gowers introduced these norms in order to obtain a quantitative proof of Szemer\'edi's theorem on arithmetic progressions, and they serve as a measure of the higher order quasirandomness of functions defined on finite abelian groups. A basic illustration of this phenomenon is given by the following fact: whenever $A \subset \mathbb{Z}/N \mathbb{Z}$ is a set of size $\delta N$ such that $\|\id_A - \delta\|_{\mathsf{U}^k} = o(1)$, then $A$ has $(1 + o(1)) \delta^{k+1}N^2$ arithmetic progressions of length $k + 1$. This motivates the study of functions which have large uniformity norms. The results which describe such functions $f \colon G \to \mathbb{D} = \{z \in \mathbb{C} \colon |z| \leq 1\}$ are referred to as the \emph{inverse theorems for uniformity norms} and typically have the following form: given the group $G$ and order $k$, there is some algebraically structured family of functions $\mathcal{Q}$, depending on $G$ and $k$, such that whenever $f \colon G \to \mathbb{D}$ is a function with the norm bound $\|f\|_{\mathsf{U}^k} \geq c$, then one may find an obstruction function $q \in \mathcal{Q}$ such that $\Big|\ex_{x \in G} f(x) \overline{q(x)}\Big|\geq \Omega_{c,k}(1)$. One also requires that $\mathcal{Q}$ is roughly minimal in the sense that an approximate version of a converse holds; namely whenever $\Big|\ex_{x \in G} f(x) \overline{q(x)}\Big|\geq c$ holds for a function $f \colon G \to \mathbb{D}$ and obstruction $q \in \mathcal{Q}$, then we also have the norm bound $\|f\|_{\mathsf{U}^k} \geq \Omega_{c, k}(1)$.\\

The family of obstruction functions can be taken to be nilsequences (which we will not define here) when $G = \mathbb{Z}/N\mathbb{Z}$, as shown by Green, Tao and Ziegler~\cite{StrongUkZ}, and phases of non-classical polynomials (which we shall define later) when $G= \mathbb{F}_p^n$, which follows from results of Bergelson, Tao and Ziegler~\cite{BergelsonTaoZiegler} and Tao and Zielger~\cite{TaoZiegler}. Another approach to these questions is via the theory of nilspaces developed in papers by Szegedy~\cite{Szeg}, Camarena and Szegedy~\cite{CamSzeg}, and Candela, Gonz\'alez-S\'anchez and Szegedy~\cite{CandGonzSzeg}. (See also detailed treatments of this theory in~\cite{GMV1},~\cite{GMV2},~\cite{GMV3}.)\\ 

The inverse theorems mentioned above are either ineffective or give poor bounds and, given the applications, it is of interest to make the proofs quantitative. For general $k$, this was achieved by Manners~\cite{Manners} for the case when $G = \mathbb{Z}/N\mathbb{Z}$ and by Gowers and the author in~\cite{genPaper} for the case when $G = \mathbb{F}_p^n$, provided $p \geq k$, which is known as the \emph{high-characteristic case}, when the family of obstruction functions reduces to polynomials in the usual sense. Previously, quantitative bounds were obtained in the inverse question for $\|\cdot\|_{\mathsf{U}^3}$ norm by Green and Tao~\cite{StrongU3} for abelian groups of odd order and by Samorodnitsky when $G = \mathbb{F}_2^n$ in~\cite{SamorU3} (see also a very recent work of Jamneshan and Tao~\cite{U3AsgarTao}).\\

On the other hand, in the so-called \emph{low characteristic case}, where $p < k$, the bounds are still ineffective. However, even in that case~\cite{genPaper} gives a strong partial result. 

\begin{theorem}[Gowers and Mili\'cevi\'c~\cite{genPaper}]\label{partialInverseTheorem}Suppose that $f \colon \mathbb{F}^n_p \to \mathbb{D}$ is a function such that $\|f\|_{\mathsf{U}^{k}} \geq c$. Then there exists a multilinear form $\alpha \colon \underbrace{\mathbb{F}^n_p \times \mathbb{F}^n_p \times \dots \times \mathbb{F}^n_p}_{k-1} \to \mathbb{F}_p$ such that
\begin{equation}\label{partialCorrelationForm}\Big| \exx_{x, a_1, \dots, a_{k-1}} \mder_{a_1} \dots \mder_{a_{k-1}} f(x) \omega^{\alpha(a_1, \dots, a_{k-1})}\Big| \geq \Big(\exp^{(O_k(1))}(O_{k,p}(c^{-1}))\Big)^{-1}.\end{equation}
\end{theorem}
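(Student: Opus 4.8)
The plan is to prove Theorem~\ref{partialInverseTheorem} by induction on $k$, peeling the multilinear form off one variable at a time from the tower of Cauchy--Schwarz inequalities defining $\|\cdot\|_{\mathsf{U}^k}$. The base case $k=1$ is trivial: $\|f\|_{\mathsf{U}^1} = |\exx_x f(x)|$, so the empty form $\alpha = 0$ works; and $k=2$ is Fourier analysis, since $\|f\|_{\mathsf{U}^2}^4 = \sum_{\xi}|\hat f(\xi)|^4$ forces some character $a \mapsto \omega^{-\xi\cdot a}$ to satisfy $\exx_{x,a}\mder_a f(x)\,\omega^{-\xi\cdot a} = |\hat f(\xi)|^2 \geq c^4$. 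For the inductive step I would use the identity $\|f\|_{\mathsf{U}^k}^{2^k} = \exx_{a_{k-1}}\|\mder_{a_{k-1}}f\|_{\mathsf{U}^{k-1}}^{2^{k-1}}$ to produce a set $A \subseteq \mathbb{F}_p^n$ of density $\gtrsim_{k} c^{2^k}$ of directions $a_{k-1}$ with $\|\mder_{a_{k-1}}f\|_{\mathsf{U}^{k-1}}$ bounded below, and apply the inductive hypothesis to each $\mder_{a_{k-1}}f$. Since discrete derivatives commute, this yields for every $a_{k-1} \in A$ a multilinear form $\beta_{a_{k-1}}\colon (\mathbb{F}_p^n)^{k-2}\to\mathbb{F}_p$ with
\[\Big|\exx_{x,a_1,\dots,a_{k-2}} \mder_{a_1}\cdots\mder_{a_{k-1}}f(x)\,\omega^{\beta_{a_{k-1}}(a_1,\dots,a_{k-2})}\Big| \geq c',\]
where $c'$ is bounded below in terms of $c,k,p$ with a tower-type loss; this accumulation of losses over the $k$ stages is exactly what produces the iterated exponential in~\eqref{partialCorrelationForm}.

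The heart of the argument is then to linearise the dependence of $\beta_{a_{k-1}}$ on $a_{k-1}$: I would aim to show that, after passing to a dense subset of $A$ and correcting the forms by controlled errors, one may take $\beta_{a_{k-1}}(a_1,\dots,a_{k-2}) = \alpha(a_1,\dots,a_{k-2},a_{k-1})$ for a single multilinear form $\alpha$ in $k-1$ variables. I see two ingredients. First, the cocycle identity $\mder_{a+b}h(x) = \mder_a h(x+b)\,\mder_b h(x)$, applied with $h = \mder_{a_1}\cdots\mder_{a_{k-2}}f$, which after unwinding the lower-order derivative terms should force an approximate additivity relation $\beta_{a+b}\approx\beta_a+\beta_b$ valid for a dense set of triples $(a,b,a+b)$, in a suitable averaged sense on the coefficient tensors. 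Second, a ``dependence on parameters'' step of the kind developed in the quantitative inverse-theorem literature, showing that the assignment $a_{k-1}\mapsto\beta_{a_{k-1}}$ can be chosen with bounded complexity, so that it is a legitimate object to feed into additive-combinatorial machinery. Given these, a (bi/multi)linear Bogolyubov-type argument upgrading approximate to exact additivity should turn $a_{k-1}\mapsto\beta_{a_{k-1}}$ into a genuinely $\mathbb{F}_p$-linear map, modulo a bounded-rank correction absorbed into $\alpha$. A reverse Cauchy--Schwarz and averaging over $a_{k-1}$, carrying along the matching control of the lower-order phases, then converts the per-direction correlations back into the single bound~\eqref{partialCorrelationForm}.

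I expect the linearisation --- turning a family of forms that is only approximately additive, only on a dense set, and only after discarding lower-degree garbage, into one exact global multilinear form with explicit bounds --- to be the main obstacle; this is the step in inverse-theorem arguments that carries the genuine content, and doing it quantitatively requires additive combinatorics rather than compactness. A secondary difficulty is bookkeeping: at each level one must propagate not just the top-order form but enough information about every lower-order derivative phase for the next round of Cauchy--Schwarz to preserve the correlation, so the induction really tracks an entire tower of forms of decreasing arity, and keeping the losses under control through this tower is what pins down the precise shape of the bound. Finally, I would not expect the characteristic to intervene anywhere here: unlike the \emph{symmetrisation} step that turns a multilinear form back into a genuine polynomial phase for $f$ --- which is exactly where $p < k$ causes trouble, and which the present paper shows can fail spectacularly --- the production of the multilinear form itself should go through uniformly in $p$.
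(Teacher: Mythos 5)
This theorem is not proved in the present paper; it is imported from Gowers--Mili\'cevi\'c~\cite{genPaper}, so I compare your sketch to that argument rather than to an in-paper proof. The overall shape you propose is right: the base cases, the averaging identity $\|f\|_{\mathsf{U}^k}^{2^k} = \ex_a\|\mder_a f\|_{\mathsf{U}^{k-1}}^{2^{k-1}}$, a Bogolyubov-type linearisation as the engine, the tower-type accumulation of losses, and the correct observation that the characteristic plays no role at this stage (it only intervenes at symmetrisation, which is the whole point of this paper).

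The gap is at exactly the step you flag. Applying the inductive hypothesis for each $a_{k-1}$ in the dense set $A$ produces \emph{some} $(k-2)$-linear form $\beta_{a_{k-1}}$ with large correlation, but with no control over the choice: two valid choices can differ by anything whose phase still correlates, and nothing forces the assignment $a_{k-1}\mapsto\beta_{a_{k-1}}$ to be canonical, additively structured, or even well-defined up to bounded-rank ambiguity. The cocycle identity for discrete derivatives therefore does not by itself yield an approximate additivity $\beta_{a+b}\approx\beta_a+\beta_b$; making the per-direction forms coherent enough for that step is already a rigidity statement about correlations with multilinear phases that is essentially as hard as the theorem one is trying to prove. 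Your phrase ``dependence on parameters'' is the right diagnosis, but without a resolution the one-variable-at-a-time induction does not close.

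What~\cite{genPaper} does instead is avoid the per-direction choice problem altogether. After repeated Cauchy--Schwarz across all $k-1$ derivative directions at once, they extract a single globally defined combinatorial object --- a Freiman multi-homomorphism, roughly a map on a dense subset of $(\mathbb{F}_p^n)^{k-1}$ respecting additive relations in each coordinate separately --- and prove a structure theorem for such maps via a multilinear extension of the bilinear Bogolyubov theorem. The linearisation is still the heart of the matter, but the object being linearised is pinned down globally before any linearisation is attempted, which is what makes the argument quantitative. So you have the right toolbox in mind but the wrong order of operations: make the multi-homomorphism well-defined first and linearise second, rather than extract a form per direction and then try to glue.
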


From now on, we focus on $G = \mathbb{F}_p^n$ in the rest of the introduction. Before proceeding with the discussion, we need to recall the notion of the partition rank of a multilinear form introduced by Naslund in~\cite{Naslund}. It is defined to be the least number $m$ such that a multilinear form $\alpha \colon G^d \to \mathbb{F}_p$ can be expressed as
\[\alpha(x_1, \dots, x_d) = \sum_{i \in [m]} \beta_i(x_j \colon j \in I_i) \gamma_i(x_j \colon j \in [d] \setminus I_i),\]
where $\beta_i \colon G^{I_i} \to \mathbb{F}_p$ and $\gamma_i \colon G^{[d] \setminus I_i} \to \mathbb{F}_p$ are multilinear maps for $i \in [m]$. We may think of partition rank as a measure of distance between two multilinear forms; the smaller the partition rank of their difference is, the closer they are. As an illustration of this principle in the context of Theorem~\ref{partialInverseTheorem}, we have the following lemma.

\begin{lemma}\label{closeformsreplacementinverse}Suppose that a function $f \colon G \to \mathbb{D}$ and a multilinear form $\alpha \colon G^{k-1} \to \mathbb{F}_p$ satisfy
\[\Big| \exx_{x, a_1, \dots, a_{k-1}} \mder_{a_1} \dots \mder_{a_{k-1}} f(x) \omega^{\alpha(a_1, \dots, a_{k-1})}\Big| \geq c.\]
Let $\beta \colon G^{k-1} \to \mathbb{F}_p$ be another multilinear form such that $\on{prank}(\alpha - \beta) \leq r$. Then
\[\Big| \exx_{x, a_1, \dots, a_{k-1}} \mder_{a_1} \dots \mder_{a_{k-1}} f(x) \omega^{\beta(a_1, \dots, a_{k-1})}\Big| \geq c p^{-2r}.\]
\end{lemma}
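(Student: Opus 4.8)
The plan is to exploit the bilinear (or in this case "two-block") structure underlying partition rank so that multiplying $f$'s pushforward by $\omega^{\beta}$ instead of $\omega^{\alpha}$ only introduces a controlled loss. First I would write $\alpha - \beta = \sum_{i\in[r]} \eta_i \cdot \zeta_i$, where for each $i$ there is a partition $[k-1] = I_i \sqcup ([k-1]\setminus I_i)$ with $\eta_i$ a multilinear form in the variables indexed by $I_i$ and $\zeta_i$ a multilinear form in the remaining variables; consequently $\omega^{\alpha(a_1,\dots,a_{k-1})} = \omega^{\beta(a_1,\dots,a_{k-1})}\prod_{i\in[r]}\omega^{\eta_i(\mathbf a_{I_i})\zeta_i(\mathbf a_{[k-1]\setminus I_i})}$.

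The key device is to linearize each product $\omega^{\eta_i \zeta_i}$. Since $\eta_i(\mathbf a_{I_i})$ takes values in $\mathbb F_p$, we have $\omega^{\eta_i(\mathbf a_{I_i})\zeta_i(\cdots)} = \ex_{t_i\in\mathbb F_p} \omega^{t_i\,\zeta_i(\cdots)}\,(\text{indicator-type weight picking out } t_i = \eta_i(\mathbf a_{I_i}))$; more cleanly, use the Fourier expansion on $\mathbb F_p$ to write $\omega^{\eta_i\zeta_i}$ as an average over $s_i,t_i\in\mathbb F_p$ of $\omega^{s_i\eta_i(\mathbf a_{I_i})}\omega^{t_i\zeta_i(\mathbf a_{[k-1]\setminus I_i})}$ against a bounded kernel with $L^1$-mass $O(1)$ — each single such linearization costs a factor $p^{-2}$ in the worst case, which is where the $p^{-2r}$ comes from after iterating over $i\in[r]$. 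The point of this step is that $\omega^{s_i\eta_i(\mathbf a_{I_i})}$ depends only on the $a_j$ with $j\in I_i$ and $\omega^{t_i\zeta_i}$ only on the complementary ones, so after substituting all these expansions the correlation with $\omega^\beta$ is an average, over the auxiliary parameters $(s_i,t_i)_{i\in[r]}$, of expressions of the form $\big|\exx_{x,a_1,\dots,a_{k-1}} \mder_{a_1}\cdots\mder_{a_{k-1}}f(x)\,\omega^{\alpha(\mathbf a)}\cdot w(\mathbf a)\big|$ where $w$ is a product of at most $2r$ phases each of which is linear in a single coordinate $a_j$.

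Now I would absorb those linear phases into the iterated derivative. The standard manipulation $\mder_{a}f(x)\,\omega^{\ell(a)} $, when $\ell$ is linear, can be rewritten using the identity $\omega^{\ell(a)} = \mder_a g(x)$ for an appropriate quadratic/linear phase $g$ (or, handled coordinate by coordinate, a phase depending on $x$ and the relevant $a_j$), so that each weight $\omega^{t_i\zeta_i}$, being multilinear and hence "linear in each block", gets swallowed into $\mder_{a_{j}}$ for one chosen index $j$ in its block, at the cost of replacing $f$ by $f$ times a phase of modulus $1$. Since multiplying $f$ by a modulus-one function does not change the inner expectation's absolute value after we also correspondingly modify the obstruction — indeed one checks that $\big|\exx \mder_{a_1}\cdots\mder_{a_{k-1}}(f\bar g)(x)\,\omega^{\alpha'(\mathbf a)}\big|$ equals the original quantity with $\alpha$ replaced by $\alpha$ plus a form of partition rank $0$ in the relevant variable — the whole averaged expression is bounded below by the triangle inequality by $c$ times the total $L^1$-loss $p^{-2r}$, giving $\big|\exx_{x,a_1,\dots,a_{k-1}}\mder_{a_1}\cdots\mder_{a_{k-1}}f(x)\,\omega^{\beta(\mathbf a)}\big|\ge cp^{-2r}$, as claimed.

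The main obstacle is organizing the bookkeeping of which auxiliary phase is absorbed into which derivative $\mder_{a_j}$ without double-counting: a single block $I_i$ may have size greater than one and several of the $r$ terms may share coordinates, so one has to fix, once and for all, an injection-free assignment (or simply process the $r$ terms one at a time, updating $f$ and $\alpha$ after each) to ensure the phases genuinely telescope into the derivatives and that the accumulated cost is exactly $p^{-2}$ per term rather than something worse. Everything else is the routine Fourier-analytic averaging described above.
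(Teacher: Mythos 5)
The paper states this lemma without a proof (it is treated as a standard fact), so I will assess your argument on its own merits. The opening step --- write $\alpha-\beta=\sum_{i\in[r]}\eta_i\zeta_i$ and apply the Fourier expansion $\omega^{ab}=\tfrac1p\sum_{s,t\in\mathbb F_p}\omega^{-st}\omega^{sa+tb}$ to each factor $\omega^{\eta_i\zeta_i}$ --- is the right idea. However, there are two genuine problems afterwards. First, your bookkeeping of the cost is internally inconsistent: the kernel in that expansion has $\ell^1$-mass $p$ per term (namely $\sum_{s,t}\bigl|\tfrac1p\omega^{-st}\bigr|=p$), not $O(1)$, and the loss per term coming from this expansion is therefore a factor of $p$, not $p^{-2}$; as stated, your accounting does not produce the bound $cp^{-2r}$, nor any bound.

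Second, and more seriously, the absorption step fails. You propose to swallow $\omega^{t_i\zeta_i(\mathbf a_{J_i})}$ into $\mder_{a_j}$ for one chosen $j\in J_i$ by replacing $f$ with $f\bar g$ for a modulus-one phase $g$. Concretely this would be $g(x)=\omega^{t_i\langle x,Z(\mathbf a_{J_i\setminus\{j\}})\rangle}$, chosen so that $\mder_{a_j}g(x)=\omega^{t_i\zeta_i(\mathbf a_{J_i})}$. But one then has to apply the remaining derivatives $\mder_{a_{j'}}$, $j'\neq j$, and since $\mder_{a_j}g(x)$ is constant in $x$, these kill it: $\mder_{\mathbf a}(fg)(x)=\mder_{\mathbf a}f(x)\cdot\mder_{\mathbf a}g(x)=\mder_{\mathbf a}f(x)$ whenever $k-1\geq 2$ (applying two or more $\mder$'s to a phase linear in $x$ gives $1$). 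So $\mder_{\mathbf a}(fg)(x)$ equals $\mder_{\mathbf a}f(x)$, not $\mder_{\mathbf a}f(x)\,\omega^{t_i\zeta_i}$; the phase you wished to retain has vanished. The same objection applies to the $\eta_i$ phases. There is no value of $s_i,t_i$ for which $\omega^{s_i\eta_i+t_i\zeta_i}$ is an iterated derivative of a phase, so the desired reduction to the clean correlation with $\omega^\beta$ does not come out of this manipulation. A correct route instead uses the identity $\exx_{x,a_j}\mder_{a_j}G(x)\,\omega^{\langle a_j,\nu\rangle}=\bigl|\exx_x G(x)\,\omega^{\langle x,\nu\rangle}\bigr|^2\geq 0$ with $G=\mder_{\mathbf a_{\neq j}}f$: pulling out $\mder_{a_j}$ turns the inner expectation into a nonnegative quantity, whereupon the part of the added phase that is constant in $a_j$ can be discarded by the triangle inequality at no cost, and the part linear in $a_j$ is genuinely absorbed into the $\omega^{\langle x,\cdot\rangle}$ factor. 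Iterating over $j=1,\dots,k-1$ kills every extra phase (since each $\eta_i$ or $\zeta_i$ misses at least one variable) and shows $\bigl|\exx_{x,\mathbf a}\mder_{\mathbf a}f(x)\,\omega^{\beta+\sum(s_i\eta_i+t_i\zeta_i)}\bigr|\leq \exx_{x,\mathbf a}\mder_{\mathbf a}f(x)\,\omega^{\beta}$ for every choice of $(s_i,t_i)$, from which the claimed bound follows (in fact with $cp^{-r}$ in place of $cp^{-2r}$).
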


Returning to the discussion of the inverse theorems for uniformity norms in finite vector spaces, the deduction of the inverse theorem when $p \geq k$ proceeds by studying the multilinear form $\alpha$ provided by the Theorem~\ref{partialInverseTheorem}. The symmetry argument of Green and Tao~\cite{StrongU3} and the good bounds for the analytic versus partition rank problem~\cite{Janzer2},~\cite{LukaRank} show that $\alpha$ is \emph{$r$-approximately symmetric} for some $r = \exp^{(O_k(1))}(O_{k,p}(c^{-1}))$, by which we mean that the partition rank of the multilinear form $(x_1, \dots, x_{k-1}) \mapsto \alpha(x_1, \dots, x_{k-1}) - \alpha(x_{\pi(1)}, \dots, x_{\pi(k-1)})$ is at most $r$ for all $\pi \in \on{Sym}_{k-1}$. To finish the proof of the inverse theorem, at the very last step we invoke the assumption $p \geq k$, which allows us to define the symmetric multilinear map $\sigma \colon G^{k-1} \to \mathbb{F}_p$ by
\[\sigma(a_1, \dots, a_{k-1}) = \frac{1}{(k-1)!} \sum_{\pi \in \on{Sym}_{k-1}} \alpha(a_{\pi(1)}, \dots, a_{\pi(k-1)}),\]
which satisfies $\on{prank} (\sigma - \alpha) \leq (k-1)! r$ since $\alpha$ is $r$-approximately symmetric. Lemma~\ref{closeformsreplacementinverse} allows us to replace $\alpha$ by $\sigma$. As it turns out, when $p \geq k$ the polarization identity shows that all symmetric forms are iterated discrete additive derivatives\footnote{Similarly to discrete multiplicative derivative, for $a \in G$ we define discrete additive derivative operator $\Delta_a$ by expression $\Delta_a f(x) = f(x + a) - f(x)$ for a function $f \colon G \to H$ from $G$ to another abelian group $H$.} of polynomials, showing that the function $x \mapsto f(x) \omega^{\alpha(x,\dots, x)}$ has large $\|\cdot\|_{\mathsf{U}^{k-1}}$ norm, which completes the proof.\\

\noindent\textbf{Low characteristic obstacles.} Let us now define non-classical polynomials which are the relevant obstructions in the low characteristic case. A function $f \colon \mathbb{F}_p^n \to \mathbb{T} = \mathbb{R}/\mathbb{Z}$ is a non-classical polynomial of degree at most $d$ if $\Delta_{a_1} \dots \Delta_{a_{d+1}} f(x) = 0$ for all $a_1, \dots, a_{d+1}, x \in \mathbb{F}_p^n$. (See~\cite{TaoZiegler} for further details, including alternative description of non-classical polynomials.)\\
\indent The first obvious question is, given that the family of obstruction functions is richer when $p < k$ due to emergence of non-classical polynomials, how could we get from multilinear forms in~\eqref{partialCorrelationForm} to a non-classical polynomial? It turns out that, as in the case of classical polynomials, the iterated discrete additive derivative of a non-classical polynomial (applied the right number of times) is a symmetric multilinear form and it is possible to give characterizations of the forms that arise in this way. The following lemma of Tidor~\cite{Tidor}, building upon earlier work of Tao and Ziegler, achieves this goal.

\begin{lemma}[Tidor~\cite{Tidor}]Let $\alpha \colon G^{k-1} \to \mathbb{F}_p$ be a multilinear form. Then $\alpha$ is the discrete additive derivative of order $k-1$ of a non-classical polynomial of degree $k-1$ if and only if $\alpha$ is symmetric and
\begin{equation}\alpha(\underbrace{x, \dots, x}_p, y, a_{p+2}, \dots, a_{k-1}) = \alpha(x, \underbrace{y, \dots, y}_p, a_{p+2}, \dots, a_{k-1}).\label{discderssymmformeqn}\end{equation}
\end{lemma}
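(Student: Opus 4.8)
The plan is to read the lemma as a description of the image of a linear map. Given a non-classical polynomial $f\colon\mathbb F_p^n\to\mathbb T$ of degree at most $k-1$, set $D^{k-1}f(a_1,\dots,a_{k-1})=\Delta_{a_1}\cdots\Delta_{a_{k-1}}f(x)$. First I would record the elementary facts that make this a well-defined object of the required type: it is independent of $x$ since $\Delta_{a_1}\cdots\Delta_{a_{k-1}}f$ has degree at most $0$; it is symmetric in $a_1,\dots,a_{k-1}$ because the $\Delta_a$ commute; and it is additive in each $a_i$, because deleting one of the $k-1$ differences leaves a function $g$ of degree at most $1$, for which the constant $\Delta_a g$ is additive in $a$. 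Since moreover $\Delta_{pa_i}=\Delta_0=0$, additivity forces $p\,D^{k-1}f(\cdots)=0$, so $D^{k-1}f$ takes values in $\tfrac1p\mathbb Z/\mathbb Z$; identifying this group with $\mathbb F_p$ gives the multilinear form $\alpha$ of the statement. The lemma then says that the image of $f\mapsto D^{k-1}f$ on degree-$(k-1)$ polynomials is exactly the set of symmetric multilinear forms obeying \eqref{discderssymmformeqn}.

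For the forward inclusion, symmetry is immediate; for \eqref{discderssymmformeqn} I would factor out the outer differences. Writing $h=\Delta_{a_{p+2}}\cdots\Delta_{a_{k-1}}f$ — a non-classical polynomial of degree at most $p+1$ — the two sides of \eqref{discderssymmformeqn} become the constants $\Delta_x^{p}\Delta_y h$ and $\Delta_x\Delta_y^{p}h$, so it suffices to prove $\Delta_x^{p}\Delta_y h=\Delta_x\Delta_y^{p}h$. Expanding $\Delta_v^{p}=\sum_{i=0}^{p}\binom pi(-1)^{p-i}T_{iv}$ with $T$ the translation operator and using $T_{pv}=\mathrm{id}$, the extreme terms $i=0,p$ combine into $((-1)^p+1)\,\mathrm{id}$: for $p=2$ this is the identity $\Delta_v^{2}=-2\Delta_v$, valid for all functions, and both sides collapse to $-2\Delta_x\Delta_y h$; for odd $p$ the extreme terms cancel and every surviving coefficient is divisible by $p$, after which the finite Taylor (Newton) expansion of $h$ in the plane spanned by $x,y$ — which terminates since $\deg h\le p+1$ — reduces the claimed equality to a congruence modulo $\tfrac1p\mathbb Z/\mathbb Z$ between low-degree iterated differences of $h$, which one settles by a direct computation with the Tao--Ziegler monomials of degree at most $p+1$.

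The reverse inclusion is the substantive direction: given a symmetric multilinear $\alpha$ satisfying \eqref{discderssymmformeqn}, I would build $f$ from the Tao--Ziegler classification, by which every non-classical polynomial of degree at most $k-1$ is an integer combination of elementary monomials $p^{-s-1}\,|x_{i_1}|^{e_1}\cdots|x_{i_r}|^{e_r}$ with $e_1,\dots,e_r\in\{1,\dots,p-1\}$, $s\ge 0$, and $\sum_\ell e_\ell+(p-1)s\le k-1$. Writing $\alpha$ in coordinates as a symmetric tensor over $\mathbb F_p$, its entries are indexed by multisets $I$ of size $k-1$ from $[n]$; to a multiset whose distinct elements $i_1,\dots,i_r$ occur with multiplicities $m_1,\dots,m_r$ I associate the elementary monomial with exponents $e_\ell$ and level $s=\sum_\ell q_\ell$, where $m_\ell=e_\ell+(p-1)q_\ell$ with $1\le e_\ell\le p-1$ — this has degree $\sum_\ell m_\ell=k-1$ — and I let the coefficient of that monomial in $f$ be, up to the normalization of the derivative, the corresponding entry of $\alpha$. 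Different multisets can map to the same elementary monomial (e.g.\ for $p=3$ both $\{i,i,i,j\}$ and $\{i,j,j,j\}$ give $\tfrac19|x_i||x_j|$), and the point is that \eqref{discderssymmformeqn} — together with its specializations, such as $\alpha(x^{p+1},y,\vec a)=\alpha(x^{2},y^{p},\vec a)$ obtained by further collapsing arguments to $x$ — is precisely the statement that $\alpha$ is constant on each such fibre, which is the Frobenius compatibility $x_i^{p}\mapsto x_i$ needed to make $f$ well defined. One then verifies $D^{k-1}f=\alpha$ by direct differentiation: the prototype is $\Delta_{a_1}\cdots\Delta_{a_{k-1}}\bigl(p^{-(k-1)}|x_i|\bigr)=p^{-1}\prod_r(a_r)_i$, and in the first interesting case $p=2$, $k-1=3$ one checks that $f=\sum_{i<j<l}\tfrac12 c_{ijl}|x_i||x_j||x_l|-\sum_{i<j}\tfrac14 c_{iij}|x_i||x_j|+\sum_i\tfrac18 c_{iii}|x_i|$ works (here $c_{iij}=c_{ijj}$ by \eqref{discderssymmformeqn}).

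I expect the main obstacle to be precisely this last verification: proving that the $f$ assembled from $\alpha$ has $D^{k-1}f=\alpha$ \emph{on the nose}, not merely up to corrections of lower degree, while simultaneously keeping $\deg f\le k-1$. The difficulty is that the integer lift $|\cdot|$ generates carries under differencing, so one must show these carries interact with $\Delta_{a_1}\cdots\Delta_{a_{k-1}}$ exactly in the way \eqref{discderssymmformeqn} predicts; organising this cleanly seems to need the full Tao--Ziegler structure theory together with an induction — on $k$, or on the largest multiplicity appearing in $\alpha$ — whose base case is the classical ($p\ge k$) polarisation identity and whose inductive step strips off one $p$-adic level using \eqref{discderssymmformeqn}.
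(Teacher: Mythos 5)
The paper does not prove this lemma: it is stated with the attribution to Tidor~\cite{Tidor}, who in turn builds on the Tao--Ziegler classification of non-classical polynomials, so there is no in-paper proof to compare your proposal against.

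Judged on its own, your outline assembles the correct ingredients but leaves the computational core unfinished, as you yourself note. The preliminary well-definedness facts (independence of base point, symmetry from commuting $\Delta_a$'s, multilinearity via the degree-one additivity argument, and $p$-torsion from $\Delta_{pa}=\Delta_0=0$) are handled correctly. For the forward direction, reducing \eqref{discderssymmformeqn} to $\Delta_x^{p}\Delta_y h=\Delta_x\Delta_y^{p}h$ for a non-classical polynomial $h$ of degree at most $p+1$ is the right move, and the $p=2$ case is complete via $\Delta_v^{2}=-2\Delta_v$. For odd $p$, however, observing that the nontrivial binomial coefficients in $\Delta_v^{p}$ are divisible by $p$ does not by itself finish anything: the two constants being compared live in $\tfrac{1}{p}\mathbb Z/\mathbb Z$, where multiplication by $p$ annihilates everything, so divisibility gives no purchase. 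What is actually needed is the sharper statement that $\Delta_x^{p}$ acts on non-classical polynomials of degree at most $p+1$ like $-\Delta_x$ up to degree-$0$ terms, which ultimately rests on the Frobenius congruence $|c|^p\equiv|c|\pmod p$ for the integer lift; this is exactly what your ``direct computation with Tao--Ziegler monomials'' has to deliver, and it is not a formality. For the reverse direction, your dictionary from multisets of size $k-1$ to Tao--Ziegler monomials, the identification of \eqref{discderssymmformeqn} (together with its collapsed specializations) as precisely the Frobenius consistency needed for well-definedness across fibres, and the $p=2,\ k-1=3$ prototype are all accurate; but, as you rightly flag, verifying $D^{k-1}f=\alpha$ on the nose --- carries and all --- is the substantive part and remains open in your write-up. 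In short: the skeleton is right, the computational spine is missing, and you have diagnosed where.
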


We say that a multilinear form is \emph{strongly symmetric} if it is symmetric and obeys the additional condition~\eqref{discderssymmformeqn}, as in the lemma above. Therefore, we may again pass from a multilinear form $\alpha$ to the desired obstruction, provided we can show some additional properties of $\alpha$.\\

In fact, in his work on the inverse question for $\|\cdot\|_{\mathsf{U}^4}$ norm in the case of low characteristic~\cite{Tidor}, Tidor first showed that one may assume that $\alpha$ is symmetric and then use that information to prove strong symmetry. Given all this, Tidor formulated the following conjecture, which he proved for the case of trilinear maps.

\begin{conjecture}[Tidor~\cite{Tidor}]Let $\alpha \colon G^k \to \mathbb{F}_p$ be an $r$-approximately symmetric multilinear form. Then there exists a symmetric multilinear form $\sigma \colon G^k \to \mathbb{F}_p$ such that $\on{prank}(\sigma - \alpha) \leq O(r^{O(1)})$ (where the implicit constants may depend on $p$ and $k$).\end{conjecture}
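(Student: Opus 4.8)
Since the abstract announces that this conjecture is \emph{false}, the task is really to construct a counterexample, and the shape of the obstruction already suggests where to look. Over $\mathbb{F}_2$ one cannot symmetrise a multilinear form of degree $\geq 2$, because $k!$ is not invertible; the natural substitute for a $d$-approximately symmetric $\alpha\colon V^4\to\mathbb{F}_2$ is $\Sigma:=\sum_{\pi\in\on{Sym}_4}\alpha^\pi$, where $\alpha^\pi(x_1,\dots,x_4):=\alpha(x_{\pi(1)},\dots,x_{\pi(4)})$, which is genuinely symmetric. But over $\mathbb{F}_2$ the $24$ copies of $\alpha$ cancel, so $\Sigma=\sum_{\pi\in\on{Sym}_4}(\alpha^\pi-\alpha)$ and hence $\on{prank}(\Sigma)\leq 23d=O(1)$. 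Thus the only symmetric form the averaging heuristic produces has \emph{bounded} partition rank, and so is useless as an approximant to any $\alpha$ of large partition rank. This strongly hints that the conjecture fails, provided one can exhibit an $\alpha$ that (i) is approximately symmetric, (ii) has large partition rank, and — the real point — (iii) is not within bounded partition rank of \emph{any} symmetric form, not merely of $\Sigma$.

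For the construction I would first arrange $\alpha\colon V^4\to\mathbb{F}_2$ to be \emph{exactly} symmetric in its first three arguments, and write $\partial\beta$ for the multilinear form $(x_1,x_2,x_3,x_4)\mapsto \beta(x_1,x_2,x_3,x_4)-\beta(x_4,x_2,x_3,x_1)$. For such $\alpha$ the orbit $\{\alpha^\pi:\pi\in\on{Sym}_4\}$ has at most four elements, the three differences $\alpha-\alpha^{(i\,4)}$ ($i=1,2,3$) are permutation-equivalent by conjugating with the $\on{Sym}_{\{1,2,3\}}$-symmetry, and one computes directly that $\alpha$ is then exactly $\on{prank}(\partial\alpha)$-approximately symmetric. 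So it suffices to build $\alpha$, symmetric in the first three coordinates, with $\partial\alpha$ equal to a fixed form $D$ of partition rank at most $3$, and then to prove the lower bound. Now for symmetric $\sigma$ we have $\partial\sigma=0$, so $\partial(\alpha-\sigma)=D$; conversely every $\beta$ symmetric in the first three coordinates with $\partial\beta=D$ is of the form $\alpha-\sigma$ for some symmetric $\sigma$ (since $\partial(\beta-\alpha)=0$ forces $\beta-\alpha$ to be symmetric in all four coordinates). Therefore the conjecture's conclusion for $\alpha$ is \emph{equivalent} to the assertion that $D$ admits an ``antiderivative'' of bounded partition rank, and the whole construction reduces to exhibiting one form $D$, of partition rank at most $3$, lying in the image of $\partial$ but with no $\partial$-antiderivative of partition rank $o(\sqrt[3]{n})$. (The impossibility of doing this with $D$ of partition rank $1$ or $2$ is presumably what forces the constant $3$ in the definition of approximate symmetry here.)

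To get the exponent $1/3$ I expect $D$ to be chosen so that the \emph{equation} $\partial\beta=D$, although $D$ itself is bounded, forces every solution $\beta$ to encode a generic trilinear object. Concretely, take an auxiliary space $U=\mathbb{F}_2^m$ with $\dim V=n\asymp m^3$ (say, identify $V$ with $\on{Sym}^3 U$, or index the coordinates of $\mathbb{F}_2^n$ by triples from $[m]$), fix a symmetric trilinear form $\tau\colon U^3\to\mathbb{F}_2$ of partition rank $\asymp m$, and engineer $D$ together with an explicit particular antiderivative $\alpha_0$ (which will then be taken as the final $\alpha$) so that from \emph{any} $\beta$ with $\partial\beta=D$ one can recover, after specialising three of its four arguments to suitable vectors and passing to a quotient, a form agreeing with $\tau$ up to bounded partition rank. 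Since partition rank only drops under specialisation and bounded corrections, this would force $\on{prank}(\beta)\gtrsim\on{prank}(\tau)\gtrsim m\asymp\sqrt[3]{n}$, which is exactly the desired bound; the verification that $\partial\alpha_0$ indeed has partition rank at most $3$ is then a direct computation.

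The main obstacle is precisely this lower bound — showing $D$ has no low-rank antiderivative. The difficulty is that approximate symmetry is extremely robust: any operation commuting with the $\on{Sym}_4$-action (restricting to a subspace, substituting equal arguments, averaging as above) turns $\alpha$ into something again approximately symmetric, hence, by the bilinear case and by Tidor's trilinear theorem, close to symmetric — so no such operation can ever certify that $\alpha$ is far from symmetric. The distinguishing argument must therefore be built around the distinguished fourth coordinate and the specific internal structure of $D$: one has to show that expressing $D$ as $\beta-\beta^{(1\,4)}$ inevitably ``spreads $\tau$ out'' across $\beta$, so that no choice of the symmetric ``constant of integration'' can collapse it. Making this rigorous — rather than merely failing oneself to find a low-rank $\beta$ — is the delicate heart of the proof, and is also where the exact value of the exponent and of the constant $3$ should emerge.
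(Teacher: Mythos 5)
You correctly see that this conjecture must be \emph{refuted}, and the scaffolding you propose matches the paper: take $\alpha$ symmetric in its first three arguments, arrange $\partial\alpha := \alpha - \alpha\circ(1\,4)$ to be a fixed form $D$ of bounded partition rank (which makes $\alpha$ approximately symmetric via the $\on{Sym}_{\{1,2,3\}}$-coset argument), and observe that the conjecture's conclusion for $\alpha$ is equivalent to $D$ having a low-rank $\partial$-antiderivative symmetric in the first three coordinates. The paper's instance is $\phi(x,y,z,w)=\sum_{i<j}(x_iy_jz_jw_i+x_jy_iz_jw_i+x_jy_jz_iw_i)$ on plain $V=\mathbb{F}_2^n$, with $D=\rho(x,y)\rho(z,w)+\rho(x,z)\rho(y,w)$ and $\rho$ the standard dot product of rank $n$.

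The gap is the lower bound, which you flag yourself as the unproven ``delicate heart of the proof,'' and the mechanism you sketch for it is neither what the paper does nor shown to be workable. You propose identifying $V$ with $\on{Sym}^3 U$ for $\dim U = m \asymp n^{1/3}$ and engineering $D$ so that every antiderivative encodes a trilinear form $\tau$ of rank $\asymp m$. The paper keeps $V=\mathbb{F}_2^n$ plain, hides no trilinear object in $D$, and proves the lower bound by the \emph{bilinear regularity method}: assuming $\on{prank}(\phi-\sigma)\leq r$ for symmetric $\sigma$, restrict to a subspace killing the linear-times-trilinear terms, apply a bilinear regularity lemma to the $\leq 6r$ bilinear factors so that every nonzero linear combination of them and $\rho$ is high-rank, and then repeatedly invoke a bilinear counting lemma together with $\phi=\phi\circ(1\,2)=\phi\circ(1\,3)$ and $\phi+\phi\circ(1\,4)=D$ to force the coefficient matrices and the $\rho$-coefficient forms into a fully symmetric shape, ending with $\rho(x,y)\rho(z,w)+\rho(x,z)\rho(y,w)=0$ on a subspace where $\on{rank}\rho\geq 2$, a contradiction. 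The exponent $1/3$ is the quantitative price of that regularity step --- the regularity parameter must be $\gtrsim r^2$ for the counting lemma to run with $\leq 6r$ forms, and then $n$ must exceed roughly $r$ times that, giving $n\gtrsim r^3$ --- not a $\dim V=m^3$ phenomenon as you hypothesize. Without this argument or a genuine substitute, the proposal establishes the construction but not the theorem.
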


The main result of this paper is that, despite its formulation being rather natural, the conjecture above is false.

\begin{theorem}\label{mainCounterThm}Given a sufficiently large positive integer $n$ there exists a multilinear form $\alpha \colon \mathbb{F}_2^n \times \mathbb{F}_2^n \times \mathbb{F}_2^n \times \mathbb{F}_2^n \to \mathbb{F}_2$ which is 3-approximately symmetric and $\on{prank}(\sigma - \alpha) \geq \Omega(\sqrt[3]{n})$ for all symmetric multilinear forms $\sigma$.\end{theorem}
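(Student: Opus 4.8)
I would prove this by isolating an invariant of a multilinear form that is forced to be ``cheap'' for every form lying within bounded partition rank of a symmetric one, and then engineering $\alpha$ so that this invariant is ``expensive''. The invariant I would use is the top-degree part of the diagonal polynomial $x\mapsto\alpha(x,x,x,x)$, together with the notion of \emph{strength} of a homogeneous polynomial $Q$ (the least number of products of lower-degree polynomials summing to $Q$).

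\noindent\textbf{Why the diagonal is the right invariant.} Over $\mathbb{F}_2$, if $\sigma\colon V^4\to\mathbb{F}_2$ is symmetric then the coefficient of a squarefree quartic monomial $x^{(i)}x^{(j)}x^{(k)}x^{(l)}$ ($i<j<k<l$) in $\sigma(x,x,x,x)$ is $\sum_{\pi\in\on{Sym}_4}\sigma_{\pi(i)\pi(j)\pi(k)\pi(l)}=24\,\sigma_{ijkl}=0$, so $\sigma(x,x,x,x)$ has degree at most $3$. On the other hand, if $\on{prank}(\eta)\le r$, writing $\eta=\sum_{m=1}^r\beta_m(x_{I_m})\gamma_m(x_{I_m^c})$ with $1\le|I_m|\le 3$ and restricting to the diagonal shows that the degree-$4$ part of $\eta(x,x,x,x)$ is a sum of at most $r$ products of two polynomials of positive degree, i.e.\ has strength at most $r$. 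Hence if $\alpha=\sigma+\eta$ with $\sigma$ symmetric and $\on{prank}(\eta)\le r$, then the quartic part $Q_\alpha$ of $\alpha(x,x,x,x)$ satisfies $\on{strength}(Q_\alpha)\le r$. It therefore suffices to construct a $3$-approximately symmetric $\alpha$ for which $\on{strength}(Q_\alpha)=\Omega(\sqrt[3]{n})$; note that, crucially, approximate symmetry does \emph{not} force $Q_\alpha$ to vanish, since over $\mathbb{F}_2$ a sum of $24$ nearly-equal things need not be small.

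\noindent\textbf{The construction.} I would look for $\alpha$ whose asymmetry is simultaneously (a) concentrated enough that the difference $\alpha-\alpha\circ\tau$ has partition rank $1$ for every transposition $\tau$, so that $3$-approximate symmetry follows automatically, and (b) structured enough that $Q_\alpha$ is a provably high-strength quartic. For (a), since $\on{Sym}_4$ is generated by the three adjacent transpositions and every element is a product of at most three transpositions, one telescopes $\alpha-\alpha\circ\pi$ and uses that applying a fixed permutation to the arguments preserves partition rank; the ``$2+2$'' splits of $\{1,2,3,4\}$ available over $\mathbb{F}_2$ are what make it possible to arrange that each adjacent-transposition difference collapses to a single product of a linear and a trilinear form. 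For (b), I would let the ambient space carry an auxiliary index set of size $m$, encode a nondegenerate alternating bilinear form $\beta$ on the $m$-dimensional core, and take $\alpha$ to be a signed sum of products of bilinear pieces built from $\beta$ and from coordinate functionals, spread over $n\asymp m^{3}$ coordinates (I expect the cubic relation to be exactly what is needed to fit all the correction terms into partition rank $1$). This should make $Q_\alpha$ a ``Pfaffian-type'' quartic of the shape $\sum_{i<j<k<l}\on{Pf}\bigl(\beta[\{i,j,k,l\}]\bigr)\,x^{(i)}x^{(j)}x^{(k)}x^{(l)}$, and the verification of (a) is then a direct computation that the construction is reverse-engineered to satisfy.

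\noindent\textbf{The main obstacle.} The genuinely hard step is the strength lower bound for $Q_\alpha$: one must show that this explicit structured quartic cannot be written as a sum of few products of polynomials of smaller degree. I would argue by contradiction, using a cheap such decomposition to produce, after restricting variables so as to kill the lower-degree factors, an unexpectedly low-rank (equivalently, high-bias) certificate for the core form $\beta$, contradicting nondegeneracy once the number of products drops below $\Omega(m)$. Tracking the loss through the ``spreading'' of $\beta$ over the $m^{3}$ coordinates and through the passage from quartic strength back to the structure of $\beta$ is precisely what converts the natural bound $m$ into $\sqrt[3]{n}$ rather than $n$; a secondary but more routine difficulty is pinning down the sign pattern and the bilinear pieces so that all three adjacent-transposition differences really do collapse to partition rank $1$.
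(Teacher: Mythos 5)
Your high-level strategy is genuinely different from the paper's, and the reduction you carry out (symmetric $\sigma$ contributes nothing to the squarefree quartic part of the diagonal; a partition-rank-$r$ difference $\eta$ contributes a quartic of strength at most $r$) is correct. However, there is a structural obstacle that you do not address and that may be fatal to the proposal. Observe that the quartic $Q_\alpha$ you propose to use is completely determined by the \emph{symmetrization} $\Sigma(\alpha)=\sum_{\pi\in\on{Sym}_4}\alpha\circ\pi$: indeed the coefficient of $x_ix_jx_kx_l$ ($i<j<k<l$) in $\alpha(x,x,x,x)$ equals $\Sigma(\alpha)(e_i,e_j,e_k,e_l)$. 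Over $\mathbb{F}_2$ one has $24\alpha=0$, so $\Sigma(\alpha)=\sum_{\pi\neq\mathrm{id}}(\alpha+\alpha\circ\pi)$, and consequently, for \emph{any} $3$-approximately symmetric $\alpha$, the symmetric form $\Sigma(\alpha)$ has partition rank at most $23\cdot 3$ --- a constant independent of $n$. In particular, $Q_\alpha$ is always the quartic attached to a symmetric $4$-linear form of bounded partition rank. Your whole approach therefore hinges on showing that such a quartic can have strength $\Omega(\sqrt[3]{n})$, which is a separate, nontrivial statement that you have not established and which is not obviously true; in several natural cases (for instance, perturbations of a form that is exactly symmetric in three variables, such as the paper's $\phi$, for which $\Sigma(\phi)=0$ and $Q_\phi=0$, or rank-one perturbations, where $Q_\alpha$ has strength $\le 1$) the invariant collapses entirely. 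Note also that any form with a single exact symmetry, i.e.\ fixed by some transposition, automatically has $\Sigma(\alpha)=0$ and $Q_\alpha=0$, so your construction must have \emph{no} exact symmetry, which makes the verification of $3$-approximate symmetry more delicate than the telescoping you sketch.

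Beyond this, the step you identify as the main obstacle --- a strength lower bound over $\mathbb{F}_2$ for your Pfaffian-type quartic --- is where all the real work lives, and you only offer a one-sentence sketch. The paper takes a quite different route that sidesteps the invariant problem completely: it constructs an explicit $\phi$ that is \emph{exactly} symmetric in the first three variables and satisfies $\phi+\phi\circ(1\,4)=\rho(x,y)\rho(z,w)+\rho(x,z)\rho(y,w)$ for a full-rank symmetric bilinear form $\rho$, and proves the lower bound by assuming $\on{prank}(\phi-\sigma)\le r$, gathering the resulting bilinear pieces, applying a bilinear regularity lemma to pass to a subspace where all non-trivial linear combinations with $\rho$ have high rank, and then extracting, via an algebraic counting lemma, symmetry constraints on the coefficients that ultimately contradict $\phi+\phi\circ(1\,4)\neq 0$. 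This regularity route is robust precisely because it does not require any non-vanishing of a bounded-complexity invariant; the $\sqrt[3]{n}$ emerges from the codimension loss $\asymp rm\asymp r^3$ in the regularity lemma. If you want to pursue the diagonal-quartic idea, you would first need to settle whether a symmetric $4$-tensor of bounded partition rank over $\mathbb{F}_2$ can have a high-strength associated quartic, and then actually prove the strength lower bound for your construction; as written, both halves are gaps.
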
 

Regarding the approach to a quantitative inverse theorem for uniformity norms in the case of low characteristic, this means that one needs to work more closely with the assumption~\eqref{partialCorrelationForm}. In fact, it is possible to overcome the additional difficulties identified in this paper, and to prove a quantitative inverse theorem for uniformity norms in the case of low characteristic, which we do in a forthcoming paper.

\begin{theorem}[Mili\'cevi\'c~\cite{LukaF2}]Suppose that $f \colon \mathbb{F}^n_2 \to \mathbb{D}$ be a function such that $\|f\|_{\mathsf{U}^{k}} \geq c$. Then there exists a non-classical polynomial $q \colon \mathbb{F}_2^n \to \mathbb{T}$ of degree at most $k-1$ such that 
\[\Big|\exx_{x} f(x) \exp\Big(2 \pi i q(x)\Big)\Big| \geq \Big(\exp^{(O_k(1))}(O_{k,p}(c^{-1}))\Big)^{-1}.\]
\end{theorem}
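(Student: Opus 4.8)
\medskip

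\noindent\textbf{Proof plan.} The strategy is to reduce the inverse theorem to a structural statement about the multilinear form produced by Theorem~\ref{partialInverseTheorem} and then to induct on $k$. The base case $k=2$ is classical Fourier analysis; the cases $k=3$ and $k=4$, which amount to correcting a bilinear, resp.\ trilinear, approximately symmetric form, are within reach of the classical symmetrisation --- this is where the earlier $\mathsf{U}^3$ and $\mathsf{U}^4$ results of Samorodnitsky~\cite{SamorU3} and Tidor~\cite{Tidor} sit --- so the genuinely new range is $k\ge 5$. Suppose then that $\|f\|_{\mathsf{U}^k}\ge c$. Theorem~\ref{partialInverseTheorem} yields a multilinear form $\alpha\colon(\mathbb{F}_2^n)^{k-1}\to\mathbb{F}_2$ with
\[\Big|\exx_{x,a_1,\dots,a_{k-1}}\mder_{a_1} \dots \mder_{a_{k-1}}f(x)\,\omega^{\alpha(a_1,\dots,a_{k-1})}\Big|\ge\Big(\exp^{(O_k(1))}(O_{k,p}(c^{-1}))\Big)^{-1},\]
and the symmetrisation argument of Green and Tao~\cite{StrongU3} together with the analytic-versus-partition-rank estimates of~\cite{Janzer2},~\cite{LukaRank} shows that $\alpha$ is $r$-approximately symmetric for some $r=\exp^{(O_k(1))}(O_{k,p}(c^{-1}))$. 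In high characteristic one would now average $\alpha$ over $\on{Sym}_{k-1}$ to land within bounded partition rank of a symmetric form, but Theorem~\ref{mainCounterThm} shows that this last step has no unconditional analogue when $p<k$; the substance of the proof is to supply a replacement that exploits the provenance of $\alpha$, i.e.\ that works more closely with~\eqref{partialCorrelationForm}.

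\medskip

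\noindent\textbf{Replacing the symmetrisation step.} The form $\alpha$ delivered by Theorem~\ref{partialInverseTheorem} is not an arbitrary approximately symmetric form: it is the top layer of the coherent system of multilinear data extracted from $f$ in~\cite{genPaper}, and this rigidity must be used. Concretely, working back through that construction --- rather than through the black box ``$\alpha$ is $r$-approximately symmetric'' --- I would show that the symmetry defects $\alpha(x_1,\dots,x_{k-1})-\alpha(x_{\pi(1)},\dots,x_{\pi(k-1)})$, $\pi\in\on{Sym}_{k-1}$, are all representable using linear functionals drawn from one \emph{common} subspace $\mathcal{L}\le(\mathbb{F}_2^n)^*$ of dimension $D=\exp^{(O_k(1))}(O_{k,p}(c^{-1}))$; equivalently, after partition-rank regularisation of $\alpha$ together with all of its permutations, approximate symmetry becomes exact symmetry modulo a bounded-dimensional space of exceptional atoms. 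Granting this, put $W=\bigcap_{\ell\in\mathcal{L}}\ker\ell$, of codimension $D$; then the restriction of $\alpha$ to $W^{k-1}$ is genuinely symmetric, and extending that restriction to a symmetric multilinear form $\sigma$ on all of $\mathbb{F}_2^n$ (choose a complement $\mathbb{F}_2^n=W\oplus U$ and set $\sigma(w_1+u_1,\dots,w_{k-1}+u_{k-1})=\alpha(w_1,\dots,w_{k-1})$) makes $\alpha-\sigma$ vanish on $W^{k-1}$. A routine multilinear expansion along $W\oplus U$ then gives $\on{prank}(\alpha-\sigma)\le 2^{k-1}D$, and Lemma~\ref{closeformsreplacementinverse} lets us pass from $\alpha$ to $\sigma$ at a multiplicative cost of $p^{-O(D)}$ in the correlation.

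\medskip

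\noindent\textbf{Strong symmetry and integration.} Having arranged that, up to bounded partition rank, $\alpha$ is symmetric, the next step is to upgrade it to \emph{strong symmetry}, i.e.\ to the identity~\eqref{discderssymmformeqn}, again modulo bounded partition rank; this is the analogue of the argument Tidor carries out for $\mathsf{U}^4$ in~\cite{Tidor}, tracking how $\mder_{a_1} \dots \mder_{a_{k-1}}f$ transforms when one direction is replaced by a $p$-fold repeated one, and here too the structural input above is what keeps the error terms under control. The lemma of Tidor quoted above then exhibits the so-modified $\alpha$ as the order-$(k-1)$ additive derivative of a non-classical polynomial $q$ of degree $k-1$, so that $\mder_{a_1} \dots \mder_{a_{k-1}}\exp(2\pi i q)$ equals $\omega^{\alpha(a_1,\dots,a_{k-1})}$ independently of the base point. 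Expanding $\mder_{a_1} \dots \mder_{a_{k-1}}\big(f\cdot\exp(-2\pi i q)\big)$ and using the identity $\|g\|_{\mathsf{U}^{k-1}}^{2^{k-1}}=\exx_{a_1,\dots,a_{k-1},x}\mder_{a_1} \dots \mder_{a_{k-1}}g(x)$ converts the correlation above into
\[\big\|f\cdot\exp(-2\pi i q)\big\|_{\mathsf{U}^{k-1}}\ge\Big(\exp^{(O_k(1))}(O_{k,p}(c^{-1}))\Big)^{-1},\]
at which point the inductive hypothesis for $\mathsf{U}^{k-1}$ produces a non-classical polynomial $q'$ of degree at most $k-2$ correlating with $f\cdot\exp(-2\pi i q)$; then $q+q'$ has degree at most $k-1$ and correlates with $f$, which closes the induction. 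Since the recursion runs through $O_k(1)$ values of $k$ and each step loses a bounded function of the current parameter, the final dependence on $c$ is of the stated tower type.

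\medskip

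\noindent\textbf{The main obstacle.} Everything outside the second and third paragraphs is an adaptation of existing machinery; the crux is the coherent-support statement for the symmetry defects of $\alpha$. Theorem~\ref{mainCounterThm} shows precisely that this can fail for a general $r$-approximately symmetric form --- the dimension needed to house the defects may grow like a power of $n$ --- so one genuinely has to reopen the proof of Theorem~\ref{partialInverseTheorem} and track the origin of the asymmetry: the defect produced by swapping two derivative directions $a_i\leftrightarrow a_j$ (which commute as operators) must be shown to be ``explained'' by boundedly much data, and, most delicately, this bookkeeping has to be carried out compatibly over all of $\on{Sym}_{k-1}$ at once --- the defects form a cocycle, and it is their joint structure, not their individual low rank, that must be controlled. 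Keeping the dimension $D$ a function of $c,k,p$ alone, independent of $n$, is where the real work lies.
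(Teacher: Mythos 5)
You were asked to prove the quantitative inverse theorem that this paper only \emph{quotes} from the forthcoming work~\cite{LukaF2}: the present paper contains no proof of it (it explicitly defers the argument), so there is no in-paper proof to compare your route against, and your proposal has to be judged on its own terms. On those terms it is a plan rather than a proof, and the plan has a genuine gap exactly at its load-bearing step.

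The pivotal claim in your second paragraph --- that, by reopening the construction behind Theorem~\ref{partialInverseTheorem}, one can show the symmetry defects of $\alpha$ are representable using linear functionals from a single subspace $\mathcal{L}\le(\mathbb{F}_2^n)^*$ of dimension $D=D(c,k,p)$, so that $\alpha$ becomes exactly symmetric on $W=\bigcap_{\ell\in\mathcal{L}}\ker\ell$ --- is asserted, not argued, and it is the entire content of the theorem; you acknowledge this in your final paragraph, but acknowledging the crux is not supplying it. Moreover, as formulated the target looks too strong to be the right replacement for symmetrisation: a defect of bounded partition rank need not be supported on any bounded-dimensional space of linear forms, and the mechanism isolated in this very paper produces defects such as $\rho(x,y)\rho(z,w)+\rho(x,z)\rho(y,w)$ with $\rho$ of rank $n$, which vanish on no subspace of bounded codimension. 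So your ``equivalently'' (approximate symmetry becomes exact symmetry after cutting by boundedly many linear forms) is not a reformulation of approximate symmetry at all, and Theorem~\ref{mainCounterThm} shows that the behaviour you need to exclude is fully consistent with the black-box hypothesis; you give no argument that the specific forms produced by~\eqref{partialCorrelationForm} avoid it, which is precisely where the real work of~\cite{LukaF2} must lie. The surrounding steps are plausible adaptations of known machinery --- if $\alpha-\sigma$ vanishes on $W^{k-1}$ with $\on{codim}W=D$ then indeed $\on{prank}(\alpha-\sigma)=O_k(D)$, Lemma~\ref{closeformsreplacementinverse} transfers the correlation, Tidor's characterisation converts strong symmetry into a non-classical polynomial, and the induction on $k$ closes as you describe --- but the upgrade to strong symmetry is likewise only gestured at, and without the missing structural input the argument does not start.
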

\vspace{\baselineskip}
\noindent\textbf{Counterexample overview.} The multilinear form $\alpha \colon  \mathbb{F}_2^n \times \mathbb{F}_2^n \times \mathbb{F}_2^n \times \mathbb{F}_2^n \to \mathbb{F}_2$ that will serve us as a counterexample will have the crucial properties that it is symmetric in the first three variable, while satisfying the identity
\[\alpha(x,y,z,w) + \alpha(w,y,z,x) = \rho(x,y)\rho(z,w) + \rho(x,z) \rho(y,w)\]
for some bilinear map $\rho$ of high rank\footnote{In the case of bilinear forms, the partition rank becomes just the usual notion of rank from linear algebra.}. It is easy to see that such a form $\alpha$ is necessarily 3-approximately symmetric. On the other hand, to prove that $\alpha$ is far from symmetric multilinear forms we use bilinear regularity method, used in~\cite{U4paper}, which consists of passing to subspaces where the rank of bilinear maps is large and then relying on the high-rank property to obtain equidistribution of values of the relevant bilinear maps. Using the usual graph-theoretic regularity method (i.e.\ Szemer\'edi regularity lemma and related tools) would give much worse bounds in Theorem~\ref{mainCounterThm}.\\

\noindent\textbf{Acknowledgements.} This work was supported by the Serbian Ministry of Education, Science and Technological Development through Mathematical Institute of the Serbian Academy of Sciences and Arts.

\section{Preliminaries}

\noindent For the rest of the paper, fix a positive integer $n$ and set $G = \mathbb{F}_2^n$. In this preliminary section we setup the notation for an action of the symmetry group $\on{Sym}_4$ on $G^4$, we recall the notion and properties of the rank of bilinear maps and we discuss the bilinear regularity method needed for the proof of Theorem~\ref{mainCounterThm}.\\

\noindent\textbf{Action of $\on{Sym}_4$.} We define a natural action of $\on{Sym}_4$ on $G^4$ given by permuting the coordinates, which is similar to the left regular representation of the group $\on{Sym}_4$. For a permutation $\pi \in \on{Sym}_4$ we misuse the notation and write $\pi \colon G^4 \to G^4$ for the map defined by $\pi(x_1, x_2, x_3, x_4) = (x_{\pi^{-1}(1)},$ $x_{\pi^{-1}(2)},$ $x_{\pi^{-1}(3)},$ $x_{\pi^{-1}(4)})$. This defines an action on $G^4$; indeed, given two permutations $\pi, \sigma \in \on{Sym}_4$ and $x_1, \dots, x_4 \in G$ we have
\begin{align*}\sigma(\pi(x_1, x_2, x_3, x_4)) = &\sigma(x_{\pi^{-1}(1)}, x_{\pi^{-1}(2)}, x_{\pi^{-1}(3)}, x_{\pi^{-1}(4)})\\
 = &(x_{\pi^{-1}(\sigma^{-1}(1))}, x_{\pi^{-1}(\sigma^{-1}(2))}, x_{\pi^{-1}(\sigma^{-1}(3))}, x_{\pi^{-1}(\sigma^{-1}(4))})\\
 = &(x_{(\sigma \pi)^{-1}(1)}, x_{(\sigma \pi)^{-1}(2)}, x_{(\sigma \pi)^{-1}(3)}, x_{(\sigma \pi)^{-1}(4)}) \\
= &(\sigma \pi)(x_1, \dots, x_4).\end{align*}

Hence, given a multilinear form $\alpha \colon G^4 \to \mathbb{F}_2$ and a permutation $\pi$ inducing the map $\pi \colon G^4 \to G^4$, we may compose the two maps and the composition $\alpha \circ \pi$ would also be a multilinear form. For example if $\pi = (1\,\,2\,\,3)$ in the cycle notation, then $\alpha \circ \pi(x_1, x_2, x_3, x_4) = \alpha(x_3, x_1, x_2, x_4)$. With this notation, our main result can be expressed as follows: there exists a multilinear form $\alpha \colon G^4 \to \mathbb{F}_2$ such that $\alpha + \alpha \circ \pi$ is of low partition rank for all $\pi \in \on{Sym}_4$ and $\alpha$ differs from any symmetric multilinear form by a multilinear form of large partition rank.\\

\noindent\textbf{Rank of bilinear maps.} Let $U, V$ be finite-dimensional vector spaces over $\mathbb{F}_2$. Let $\beta \colon U \times V \to \mathbb{F}_2$ be a bilinear form. Fix a scalar product on $V$ and let $B \colon U \to V$ be the map such that $\beta(x,y) = B(x) \cdot y$ for all $x \in U$ and $y \in V$. We define the rank of $\beta$ to be the rank of $B$. The following lemma gives a few other characterizations of the rank and shows that the rank is well-defined (this follows from part \textbf{(i)}).

\begin{lemma}[Alternative characterizations of rank]\label{rankscharlemma}Let $\beta \colon U \times V \to \mathbb{F}_2$ be a bilinear form. 
\begin{itemize}
\item[\textbf{(i)}]We have $\ex_{x \in U,y \in V} (-1)^{\beta(x,y)} = 2^{-\on{rank} \beta}$. 
\item[\textbf{(ii)}]Whenever $\beta(x,y) = \sum_{i \in [s]} u_i(x) v_i(y)$ for linear forms $u_1, \dots, u_s \colon U \to \mathbb{F}_2$ and $v_1, \dots, v_s \colon V \to \mathbb{F}_2$ such that $u_1, \dots, u_s$ are linearly independent and $v_1, \dots, v_s$ are linearly independent, then $s = \on{rank} \beta$.
\item[\textbf{(iii)}]The rank is the least number $s$ such that  $\beta(x,y) = \sum_{i \in [s]} u_i(x) v_i(y)$ for linear forms $u_1, \dots, u_s \colon U \to \mathbb{F}_2$ and $v_1, \dots, v_s \colon V \to \mathbb{F}_2$.
\end{itemize}\end{lemma}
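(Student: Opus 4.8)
The plan is to establish part \textbf{(i)} by a direct character-sum computation, note that it already certifies that the rank is well-defined, and then derive \textbf{(ii)} and \textbf{(iii)} from it by a change of coordinates. For \textbf{(i)}, write $\beta(x,y) = B(x)\cdot y$ with $B \colon U \to V$ linear and average first over $y$:
\[\ex_{x \in U,\, y \in V}(-1)^{\beta(x,y)} = \ex_{x \in U}\Big(\ex_{y\in V}(-1)^{B(x)\cdot y}\Big) = \ex_{x\in U} \id[B(x) = 0],\]
where the inner average is $1$ when $B(x)=0$ and $0$ otherwise, since non-degeneracy of the scalar product makes $y \mapsto w\cdot y$ a non-zero, hence surjective, linear form on $V$ whenever $w\neq 0$. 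Thus the quantity equals $|\ker B|/|U| = 2^{-\on{rank} B} = 2^{-\on{rank}\beta}$. Since the left-hand side makes no reference to the chosen scalar product, this simultaneously shows that $\on{rank}\beta$ does not depend on that choice, so the rank is well-defined.

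For \textbf{(ii)}, suppose $\beta(x,y) = \sum_{i\in[s]} u_i(x) v_i(y)$ with $u_1,\dots,u_s$ linearly independent and $v_1,\dots,v_s$ linearly independent. Extend $u_1, \dots, u_s$ to a basis of $U^*$ and $v_1, \dots, v_s$ to a basis of $V^*$; passing to the corresponding coordinates identifies $U$ and $V$ with $\mathbb{F}_2^{\dim U}$ and $\mathbb{F}_2^{\dim V}$ so that $u_i(x)=x_i$ and $v_i(y)=y_i$, whence $\beta(x,y) = \sum_{i\in[s]} x_i y_i$. Relabelling the underlying finite sets does not change the value of $\ex(-1)^{\beta}$, so
\[\ex_{x, y}(-1)^{\beta(x,y)} = \prod_{i\in[s]} \ex_{x_i, y_i \in \mathbb{F}_2}(-1)^{x_i y_i} = 2^{-s},\]
and part \textbf{(i)} gives $\on{rank}\beta = s$.

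For \textbf{(iii)}, expanding $\beta$ in any coordinates exhibits at least one representation $\beta(x,y) = \sum_{i\in[s]}u_i(x)v_i(y)$, so the minimum over all such $s$ is attained; fix a representation of minimal length $s$. In it the $u_i$ must be linearly independent, for if $u_s = \sum_{i<s}c_i u_i$ then $\beta(x,y) = \sum_{i<s}u_i(x)\big(v_i(y)+c_i v_s(y)\big)$ is a strictly shorter representation, contradicting minimality; by the symmetric argument the $v_i$ are also linearly independent. Part \textbf{(ii)} then yields $s = \on{rank}\beta$, so $\on{rank}\beta$ is exactly the least length of such a representation. There is no substantial obstacle in this lemma; the only points needing care are the well-definedness of the rank, which is precisely what the coordinate-free formula in \textbf{(i)} delivers, and, in \textbf{(iii)}, arguing through a \emph{minimal} representation so that one need not simultaneously track linear independence of both families while shortening.
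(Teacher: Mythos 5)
Your proof is correct and follows essentially the same route as the paper: part \textbf{(i)} by averaging the character sum over $y$ first, part \textbf{(ii)} by exploiting linear independence to reduce the sum to the standard inner product form on $\mathbb{F}_2^s$, and part \textbf{(iii)} by taking a minimal representation and showing the factors must be independent. The only cosmetic difference is in \textbf{(ii)}, where you extend $u_1,\dots,u_s$ and $v_1,\dots,v_s$ to bases of the dual spaces and change coordinates so that $\beta(x,y)=\sum_{i\in[s]} x_i y_i$ factorizes; the paper instead observes directly that $u=(u_1,\dots,u_s)\colon U\to\mathbb{F}_2^s$ is surjective (hence equidistributed) and pushes the expectation forward to $\ex_{a,b\in\mathbb{F}_2^s}(-1)^{a\cdot b}$. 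These are two presentations of the same computation.
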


\begin{proof}\textbf{Proof of (i).} If $\beta(x,y) = B(x) \cdot y$ for all $x \in U, y \in V$, then
\[\exx_{x \in U,y \in V} (-1)^{\beta(x,y)} = \exx_{x \in U, y \in V} (-1)^{B(x) \cdot y} = \exx_{x \in U} \id(B(x) = 0) = \exx_{x \in U} \id(x \in \ker B) = |\ker B| / |U|.\]
By the rank-nullity theorem $|\ker B|/|U| = 1/|\on{Im} B|$. The rank of $\beta$ is defined as the dimension of the image space $\on{Im} B$, hence 
\[\exx_{x \in U,y \in V} (-1)^{\beta(x,y)} = 2^{-\on{rank}\beta},\]
as desired.\\

\noindent\textbf{Proof of (ii).} We claim that the linear map $u \colon U \to \mathbb{F}_2^s$ is surjective. If it was not surjective, the subspace $\on{Im} u \leq \mathbb{F}_2^s$ would be proper and there would exist a non-zero vector $\lambda \in (\on{Im} u)^\perp$. But then $\lambda_1 u_1 + \dots + \lambda_s u_s$ would be identically zero, which would be in contradiction with the assumption that $u_1, \dots, u_s$ are independent. In particular, the linear map $u \colon U \to \mathbb{F}_2^s$ takes all values in $\mathbb{F}_2^s$ an equal number of times (namely $|\ker u| = p^{-s}|U|$ times). Using the same property for $v_1, \dots, v_s$ and claim \textbf{(i)} of the lemma we conclude that
\[2^{-\on{rank} \beta} = \exx_{x \in U, y \in V} (-1)^{\beta(x,y)} = \exx_{x \in U, y \in V} (-1)^{\sum_{i \in [s]} u_i(x) v_i(y)} = \exx_{a, b \in \mathbb{F}_2^s} (-1)^{a \cdot b} = \exx_{a \in \mathbb{F}^s_2} \id(a = 0) = 2^{-s},\]
proving that $s = \on{rank} \beta$.\\

\noindent\textbf{Proof of (iii).} Let $d = \dim U$. Clearly, such a decomposition exists, as we may simply take a basis $e_1, \dots, e_d$ of $U$, giving coordinates $x_1, \dots, x_d$ of vectors $x \in U$, and consider 
\[\beta(x,y) = \sum_{i \in [d]} x_i \beta(e_i, y).\]
On the other hand, if we have a decomposition with smallest possible $s$ then $u_1, \dots, u_s$ need to be linearly independent. To see that, note that if have linear dependence, then (after a possible reordering of the forms) we have $u_1 = \sum_{j \in [2,s]} \mu_j u_j$, so 
\[\beta(x,y) = \sum_{j \in [2,s]} u_j(x) (v_j(y) + \mu_j v_1(y)),\]
which has $s-1$ terms in the sum, which is a contradiction. Hence $u_1, \dots, u_s$ are linearly independent, and analogously, so are $v_1, \dots, v_s$. The claim follows from the part \textbf{(ii)}.\end{proof}

Let us also record two very simple but useful facts about bilinear forms of low rank.

\begin{lemma}\label{lowranktozero}Let $\beta \colon U \times V \to \mathbb{F}_2$ be a bilinear form of rank $r$. Then there exists a subspace $U' \leq U$ of codimension at most $r$ in $U$ such that $\beta|_{U' \times V} = 0$.\end{lemma}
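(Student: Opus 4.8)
The plan is to read off the statement directly from the characterizations of rank established in Lemma~\ref{rankscharlemma}. By part~\textbf{(iii)} of that lemma we may write $\beta(x,y) = \sum_{i \in [r]} u_i(x) v_i(y)$ for suitable linear forms $u_1, \dots, u_r \colon U \to \mathbb{F}_2$ and $v_1, \dots, v_r \colon V \to \mathbb{F}_2$. I would then set $U' = \bigcap_{i \in [r]} \ker u_i$. Each $\ker u_i$ has codimension at most $1$ in $U$, so $U'$ has codimension at most $r$; and for any $x \in U'$ and any $y \in V$ we have $u_i(x) = 0$ for every $i$, hence $\beta(x,y) = \sum_{i \in [r]} u_i(x) v_i(y) = 0$. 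Thus $\beta|_{U' \times V} = 0$, which is exactly the assertion.

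An equivalent route, perhaps even shorter, is to use the map $B \colon U \to V$ with $\beta(x,y) = B(x) \cdot y$ from the definition of rank and take $U' = \ker B$: by the rank-nullity theorem its codimension equals $\on{rank} B = \on{rank} \beta = r$, and $\beta(x,y) = B(x)\cdot y = 0$ for all $x \in \ker B$ and $y \in V$. There is no genuine obstacle in this lemma; the only minor point worth noting is that the statement asks only for codimension \emph{at most} $r$, which both arguments deliver comfortably (in fact with equality, since the forms $u_i$ in a minimal decomposition are linearly independent by parts~\textbf{(ii)}--\textbf{(iii)}).
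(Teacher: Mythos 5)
Your proposal is correct, and your second route (taking $U' = \ker B$ and invoking rank--nullity) is precisely the argument given in the paper. Your first route via Lemma~\ref{rankscharlemma}\textbf{(iii)} and intersecting the kernels of the $u_i$ is an equally valid and essentially equivalent alternative.
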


\begin{proof}By the definition of rank, $r$ is the rank of the linear map $B \colon U \to V$ that satisfies $\beta(x,y) = B(x) \cdot y$ for all $x \in U, y \in V$ for a given scalar product on $V$. By the rank-nullity theorem, the kernel $U' = \on{ker} B \leq U$ has codimension $r$. Hence, when $x \in U', y \in V$ we have $\beta(x,y) = B(x) \cdot y = 0$, as desired.\end{proof}

\begin{lemma}\label{rankdecsub}Let $\beta \colon U \times V \to \mathbb{F}_2$ be a bilinear form of rank $r$. Let $U' \leq U$ be a subspace of codimension $d$ inside $U$. Then $\beta|_{U' \times V}$ has rank at least $r -d$.\end{lemma}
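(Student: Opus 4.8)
The plan is to reduce the statement to an elementary fact about the linear map representing $\beta$. First I would recall the setup from the definition of rank: having fixed a scalar product on $V$, let $B \colon U \to V$ be the unique linear map with $\beta(x,y) = B(x) \cdot y$ for all $x \in U, y \in V$, so that $\on{rank} \beta = \on{rank} B = \dim \on{Im} B = r$. Running the very same construction for the restriction $\beta|_{U' \times V}$, using the same scalar product on $V$, produces precisely the map $B|_{U'} \colon U' \to V$, and therefore $\on{rank}(\beta|_{U' \times V}) = \dim B(U')$. Hence it suffices to prove $\dim B(U') \geq r - d$.

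For this, I would choose vectors $w_1, \dots, w_d \in U$ spanning a complement of $U'$, so that $U = U' + \langle w_1, \dots, w_d\rangle$. Applying $B$ gives $\on{Im} B = B(U) \subseteq B(U') + \langle B(w_1), \dots, B(w_d)\rangle$, whence $r = \dim \on{Im} B \leq \dim B(U') + d$, which rearranges to the claimed inequality.

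Since the argument is entirely elementary, there is no genuine obstacle here; the only point meriting a word of care is the bookkeeping observation that restricting $\beta$ to $U' \times V$ corresponds exactly to restricting the representing map $B$ to $U'$, which is immediate. An alternative route avoiding the choice of scalar product is via Lemma~\ref{rankscharlemma}\textbf{(iii)}: if $\beta|_{U' \times V}$ has rank $s$, write it as $\sum_{j \in [s]} p_j(x) q_j(y)$, extend each $p_j$ to a linear form $\tilde p_j$ on $U$, and note that $\beta - \sum_{j \in [s]} \tilde p_j q_j$ vanishes on $U' \times V$, hence factors through $(U/U') \times V$ and so has rank at most $d$; combining this with the easy subadditivity of rank (itself a consequence of Lemma~\ref{rankscharlemma}\textbf{(iii)}) yields $r \leq s + d$, i.e.\ $s \geq r - d$.
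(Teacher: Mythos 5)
Your primary argument is correct but takes a genuinely different, more elementary route than the paper. The paper works entirely with decompositions: it writes $\beta|_{U'\times V}(x,y)=\sum_{i\in[s]}u_i(x)v_i(y)$ via Lemma~\ref{rankscharlemma}\textbf{(iii)}, extends the $u_i$ to $\tilde u_i$ on $U$, observes that $\beta'=\beta+\sum_i\tilde u_i v_i$ vanishes on $U'\times V$, and then explicitly bounds $\on{rank}\beta'\leq d$ by choosing a complement of $U'$ and projecting; the inequality $r\leq s+d$ follows from \textbf{(iii)} again. You instead go straight to the definition of rank via the representing linear map $B\colon U\to V$: the restriction $\beta|_{U'\times V}$ is represented by $B|_{U'}$, so its rank is $\dim B(U')$, and $B(U)\subseteq B(U')+\langle B(w_1),\dots,B(w_d)\rangle$ gives $r\leq\dim B(U')+d$ at once. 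This buys you a shorter proof with no appeal to the decomposition lemma, at the cost of leaning on the choice of scalar product built into the definition of $B$ (which is harmless, since the lemma itself guarantees independence of that choice). Your sketched alternative route via $\tilde p_j$ and subadditivity is essentially the paper's own argument, so you have in effect produced both. One tiny caveat in the primary route: the observation that $\beta|_{U'\times V}$ is represented by $B|_{U'}$ deserves the one-line justification you give, and your bookkeeping there is correct.
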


\begin{proof}Let $s$ be the rank of $\beta|_{U' \times V}$. By part \textbf{(iii)} of Lemma~\ref{rankscharlemma} we have linear forms $u_1, \dots, u_s \colon U' \to \mathbb{F}_2$ and $v_1, \dots, v_s \colon V \to \mathbb{F}_2$ such that $\beta(x,y) = \sum_{i \in [s]} u_i(x) v_i(y)$ holds for all $x \in U'$ and $y \in V$. We may extend each $u_i$ to a linear form $\tilde{u}_i \colon U \to \mathbb{F}_2$. Let $\beta'(x,y) = \beta(x,y) + \sum_{i \in [s]} \tilde{u}_i(x) v_i(y)$. The map $\beta'$ is a bilinear form on $U \times V$ which vanishes on $U' \times V$. We claim that $\beta'$ has rank at most $d$. Since we also have that $\beta(x,y) = \beta'(x,y) + \sum_{i \in [s]} \tilde{u}_i(x) v_i(y)$, it follows that $\beta$ can be written as a sum of at most $s+d$ terms of the form $u'(x) v'(y)$ for suitable linear forms $u' \colon U \to \mathbb{F}_2$ and $v' \colon V \to \mathbb{F}_2$ and so by part \textbf{(iii)} of Lemma~\ref{rankscharlemma} we have $r \leq s + d$ as desired. We now return to showing that $\on{rank} \beta' \leq d$.\\
\indent Since $U'$ has codimension $d$ inside $U$, we may find linearly independent elements $e_1, \dots, e_d \in U$ such that $U = \langle e_1, \dots, e_d \rangle \oplus U'$. We thus obtain linear forms $\varphi_1, \dots, \varphi_d \colon U \to \mathbb{F}_2$ and a linear map $\pi \colon U \to U'$ such that for each $x \in U$ we have $x = \sum_{i \in [d]} \varphi_i(x) e_i + \pi(x)$. Using this decomposition, for arbitrary $x \in U, y \in V$ we see that 
\[ \beta'(x,y) = \beta'\Big(\sum_{i \in [d]} \varphi_i(x) e_i + \pi(x), y\Big) = \sum_{i \in [d]} \varphi_i(x) \beta'(e_i, y) + \beta'(\pi(x), y) = \sum_{i \in [d]} \varphi_i(x) \beta'(e_i, y).\]
Part \textbf{(iii)} of Lemma~\ref{rankscharlemma} implies that $\on{rank}\beta' \leq d$.\end{proof}

\noindent\textbf{Bilinear regularity method.} In the proof that our example has the desired properties we need the algebraic regularity method for bilinear maps. This method was used in~\cite{U4paper}. The following lemma, in the spirit of Corollary 5.2 of~\cite{U4paper}, essentially shows that for a given bilinear map we may pass to a subspace on which it behaves quasirandomly (which in the bilinear setting simply means that the restriction of the bilinear map has high rank). 

\begin{lemma}[Bilinear regularity lemma]\label{bilRegularityLemma}Let $m \geq 1$ be a positive integer. Let $U$ be a finite-dimensional vector space over $\mathbb{F}_2$ and let $\rho, \beta_1, \dots, \beta_r \colon U \times U \to \mathbb{F}_2$ be bilinear forms such that $\on{rank} \rho \geq (4r + 1)m$. Then there exist a subspace $U' \leq U$ of codimension at most $2rm$ and bilinear forms $\alpha_1, \dots, \alpha_s \colon U' \times U' \to \mathbb{F}_2$, where $s \leq r$, such that every non-zero linear combination of $\alpha_1, \dots, \alpha_s$ and $\rho|_{U' \times U'}$ has rank at least $m$, while every bilinear map among $\beta_1, \dots, \beta_r$ equals a linear combination of $\alpha_1, \dots, \alpha_s$ and $\rho$ on $U' \times U'$.\end{lemma}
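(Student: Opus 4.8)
The plan is to run a subspace-regularization procedure: repeatedly pass to a subspace that annihilates some low-rank combination of the current forms, until no low-rank combination survives. Let $\Lambda$ be the $\mathbb{F}_2$-span of $\rho,\beta_1,\dots,\beta_r$ inside the space of bilinear forms $U\times U\to\mathbb{F}_2$, so $\dim\Lambda\le r+1$, and for a subspace $W\le U$ let $\Lambda_W$ be the span of all restrictions $\gamma|_{W\times W}$ with $\gamma\in\Lambda$, viewed as bilinear forms on $W$. Since each $\beta_i|_{W\times W}$ automatically lies in $\Lambda_W$, it suffices to produce a subspace $U'\le U$ of codimension at most $2rm$ such that every nonzero element of $\Lambda_{U'}$ has rank at least $m$ and $\rho|_{U'\times U'}\ne 0$. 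Indeed, one then completes $\{\rho|_{U'\times U'}\}$ to a basis $\rho|_{U'\times U'},\alpha_1,\dots,\alpha_s$ of $\Lambda_{U'}$, so that $s=\dim\Lambda_{U'}-1\le r$; the nonzero linear combinations of $\alpha_1,\dots,\alpha_s$ and $\rho|_{U'\times U'}$ are precisely the nonzero elements of $\Lambda_{U'}$, each of rank $\ge m$, while each $\beta_i|_{U'\times U'}$ lies in $\Lambda_{U'}$ and is therefore such a combination.

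To construct $U'$ I would iterate: set $U^{(0)}=U$, and given $U^{(i)}$, stop with $U'=U^{(i)}$ if every nonzero element of $\Lambda_{U^{(i)}}$ has rank at least $m$; otherwise pick a nonzero $\delta\in\Lambda_{U^{(i)}}$ of rank at most $m-1$ and apply Lemma~\ref{lowranktozero} to obtain $U^{(i+1)}\le U^{(i)}$ of codimension at most $m-1$ inside $U^{(i)}$ with $\delta|_{U^{(i+1)}\times U^{(i)}}=0$, hence $\delta|_{U^{(i+1)}\times U^{(i+1)}}=0$. The point is that restriction to $U^{(i+1)}$ factors the map $\Lambda\to\Lambda_{U^{(i+1)}}$ through $\Lambda_{U^{(i)}}$, and the induced surjection $\Lambda_{U^{(i)}}\to\Lambda_{U^{(i+1)}}$ kills the nonzero element $\delta$, so $\dim\Lambda_{U^{(i+1)}}\le\dim\Lambda_{U^{(i)}}-1$. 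Hence after $t$ steps the codimension of $U^{(t)}$ in $U$ is at most $t(m-1)$ and $\dim\Lambda_{U^{(t)}}\le r+1-t$; since $\dim\Lambda_{U^{(t)}}\ge 1$ throughout (checked next), the procedure halts after $t\le r$ steps, and $U'$ has codimension at most $r(m-1)\le 2rm$.

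The remaining, and most delicate, point is that $\rho$ is never the form being eliminated, which is exactly what the hypothesis $\on{rank}\rho\ge(4r+1)m$ is for. At every stage $i$ of the procedure the codimension of $U^{(i)}$ in $U$ is at most $r(m-1)$, so applying Lemma~\ref{rankdecsub} to each of the two arguments in turn (rank being unchanged when the arguments are swapped, by part (i) of Lemma~\ref{rankscharlemma}) gives $\on{rank}(\rho|_{U^{(i)}\times U^{(i)}})\ge(4r+1)m-2r(m-1)>m$. In particular $\rho|_{U^{(i)}\times U^{(i)}}$ is a nonzero element of $\Lambda_{U^{(i)}}$ of rank exceeding $m-1$, so it is never an admissible choice of $\delta$ and $\dim\Lambda_{U^{(i)}}\ge 1$ throughout, which is what was used above, and at termination $\rho|_{U'\times U'}\ne 0$. (The apparent circularity is harmless: if the loop ran $r+1$ times the accumulated codimension would still be below $2rm\le\on{rank}\rho$, so the bound above would still hold and force $\dim\Lambda_{U^{(r+1)}}\ge 1$, contradicting $\dim\Lambda_{U^{(r+1)}}\le 0$.) The main obstacle is thus purely bookkeeping: ensuring the codimension budget, the dimension drop of $\Lambda$, and the survival of $\rho$ are all compatible; the rest is routine use of Lemmas~\ref{rankscharlemma}, \ref{lowranktozero} and~\ref{rankdecsub}.
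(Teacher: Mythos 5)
Your proof is correct and follows essentially the same greedy regularization strategy as the paper: iteratively pass to a subspace killing a low-rank combination, with the rank hypothesis on $\rho$ guaranteeing it survives each step. The only differences are cosmetic bookkeeping (you track $\dim\Lambda_{U^{(i)}}$ where the paper tracks the list of forms $\alpha_1,\dots,\alpha_s$ explicitly, and your use of Lemma~\ref{lowranktozero} gives codimension $m-1$ per step rather than the paper's $2m$), both comfortably within the stated $2rm$ budget.
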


\begin{proof}Let us first set $s = r$, $U' = U$ and $\alpha_i = \beta_i$. We shall modify the number $s$, subspace $U'$ and maps $\alpha_1, \dots, \alpha_s$ throughout the proof. At each step of the proof the number of forms $s$ will decrease by 1, while the dimension of $U'$ will increase by at most $2m$. Note that every bilinear map among $\beta_1, \dots, \beta_r$ equals some $\alpha_i$ on $U' \times U'$ so we just need to make sure that ranks of all non-zero linear combinations of $\alpha_i$ and $\rho$ are sufficiently large. Suppose on the contrary that there is a linear combination of maps $\alpha_1, \dots, \alpha_s$ and $\rho|_{U' \times U'}$ such that
\[\on{rank} \Big(\lambda_1 \alpha_1 + \dots + \lambda_s \alpha_s + \mu \rho|_{U' \times U'}\Big) < m.\]
Since by Lemma~\ref{rankdecsub} we have $\on{rank} \rho|_{U' \times U'} \geq \on{rank} \rho - 2(\dim U - \dim U') \geq m$, there exists a non-zero $\lambda_j$. Reordering $\alpha_i$ if necessary, we may assume that $\lambda_s = 1$. Thus, by Lemma~\ref{rankscharlemma}\textbf{(iii)} there exist linear forms $v_1, \dots, v_m, v'_1, \dots, v'_m \colon U' \to \mathbb{F}_2$ such that 
\[\alpha_s(x,y) = \lambda_1 \alpha_1(x,y) + \dots + \lambda_{s-1} \alpha_{s-1}(x,y) + \mu \rho(x,y) + v_1(x)v'_1(y)+\dots + v_m(x)v'_m(y).\]
We may replace $U'$ by $U' \cap \{x \in U' \colon (\forall i \in [m]) v_i(x) = v'_i(x) = 0\}$ and remove the form $\alpha_s$ from the sequence. The property that every bilinear map among $\beta_1, \dots, \beta_r$ equals a suitable linear combination on $U' \times U'$ is preserved and the procedure must terminate after at most $r$ steps.\end{proof}

The next lemma is an algebraic counting lemma, similar to Lemma 5.3 of~\cite{U4paper}.

\begin{lemma}[Bilinear counting lemma]\label{bilCountingLemma}Suppose that $\alpha \colon \mathbb{F}_2^n \times \mathbb{F}_2^n \to \mathbb{F}_2^r$ is a bilinear map with the property that the form $\lambda \cdot \alpha$ has rank at least $m$ for all non-zero vectors $\lambda \in \mathbb{F}_2^r$. Let $C$ be a coset of a subspace of $\mathbb{F}_2^n$ of codimension $d$. Let $\varepsilon >0 $. If $m > 4r + 8d + 4 \log_2\varepsilon^{-1}$ then $\alpha|_{C \times C}$ takes every value in $\mathbb{F}_2^r$ at least $(1 - \varepsilon)2^{-r}|C|^2$ times. In particular, if $m > 4r + 8d + 4$ then $\alpha|_{C \times C}$ is surjective.\end{lemma}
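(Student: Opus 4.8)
The plan is to run the standard Fourier-analytic counting argument over the target group $\mathbb{F}_2^r$. First I would fix a value $t \in \mathbb{F}_2^r$ and write $N(t) = \big|\{(x,y) \in C \times C : \alpha(x,y) = t\}\big|$; expanding $\id(\alpha(x,y) = t) = 2^{-r}\sum_{\lambda \in \mathbb{F}_2^r}(-1)^{\lambda\cdot(\alpha(x,y)-t)}$ and summing over $x,y \in C$, the term $\lambda = 0$ produces the expected main term $2^{-r}|C|^2$. So everything reduces to bounding, for each non-zero $\lambda$, the bilinear character sum $T_\lambda := \sum_{x,y\in C}(-1)^{(\lambda\cdot\alpha)(x,y)}$ and then adding up the at most $2^r$ error terms.

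To estimate $T_\lambda$, I would write $C = x_0 + W$ with $W \le \mathbb{F}_2^n$ a subspace of codimension $d$, set $\gamma = \lambda\cdot\alpha$, and substitute $x = x_0 + u$, $y = x_0 + v$ with $u,v$ ranging over $W$. Bilinearity splits $\gamma(x_0+u,\,x_0+v)$ into the constant $\gamma(x_0,x_0)$, a linear form $\nu(u) = \gamma(u,x_0)$, a linear form $\mu(v) = \gamma(x_0,v)$, and the restricted bilinear form $\gamma_0 := \gamma|_{W\times W}$. Summing over $v$ first, $\sum_{v\in W}(-1)^{\mu(v)+\gamma_0(u,v)}$ equals $|W|$ when the linear form $v\mapsto\mu(v)+\gamma_0(u,v)$ on $W$ vanishes identically and is $0$ otherwise; the set of $u$ where this occurs is empty or a coset of the left radical of $\gamma_0$, which has size $|W|/2^{\on{rank}\gamma_0}$. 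This yields $|T_\lambda| \le |W|\cdot|W|/2^{\on{rank}\gamma_0} = |C|^2\,2^{-\on{rank}\gamma_0}$. Finally, $\on{rank}(\lambda\cdot\alpha) \ge m$ by hypothesis, and since rank is symmetric under transposing the two arguments (e.g.\ by part \textbf{(i)} of Lemma~\ref{rankscharlemma}), two applications of Lemma~\ref{rankdecsub}, one per coordinate, give $\on{rank}\gamma_0 \ge m - 2d$, hence $|T_\lambda| \le |C|^2\,2^{-(m-2d)}$.

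Combining these estimates, $\big|N(t) - 2^{-r}|C|^2\big| \le 2^{-r}(2^r-1)\,|C|^2\,2^{-(m-2d)} \le |C|^2\,2^{-(m-2d)}$, which is at most $\varepsilon\,2^{-r}|C|^2$ as soon as $m - 2d \ge r + \log_2\varepsilon^{-1}$; the hypothesis $m > 4r + 8d + 4\log_2\varepsilon^{-1}$ more than suffices, giving $N(t) \ge (1-\varepsilon)2^{-r}|C|^2$ and the first assertion. For the last statement I would simply apply the first part with $\varepsilon = \tfrac12$, so that $4\log_2\varepsilon^{-1} = 4$ and $N(t) \ge \tfrac12\cdot 2^{-r}|C|^2 > 0$ for every $t$, which says $\alpha|_{C\times C}$ is surjective.

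The hard part will mostly be the bookkeeping: carefully expanding $\gamma$ over the coset $x_0 + W$ so that the two linear forms $\mu,\nu$ are correctly isolated and the surviving set of $u$ is genuinely a single coset of a radical of dimension $\dim W - \on{rank}\gamma_0$, and correctly tracking the factor-of-two loss in rank passing from $\mathbb{F}_2^n\times\mathbb{F}_2^n$ down to $W\times W$. Everything else is a routine character-sum computation, and since the constants in the hypothesis are generous, there is no need to optimise anything.
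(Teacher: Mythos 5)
Your proof is correct, and it takes a genuinely different route from the paper's. You bound the bilinear character sum $T_\lambda = \sum_{x,y\in C}(-1)^{(\lambda\cdot\alpha)(x,y)}$ directly: writing $C = x_0+W$, summing over one variable to reduce to a coset of the left radical of $\gamma_0 = (\lambda\cdot\alpha)|_{W\times W}$, and controlling $\operatorname{rank}\gamma_0$ by two applications of Lemma~\ref{rankdecsub} (using the symmetry of rank under transposition, via Lemma~\ref{rankscharlemma}\textbf{(i)}, to handle the second coordinate). The paper instead argues by contradiction: assuming some value is hit too few times, an averaging step produces a single nonzero $\lambda$ with $\big|\ex_{x,y}\id_C(x)\id_C(y)(-1)^{\lambda\cdot\alpha(x,y)}\big| \geq \varepsilon 2^{-r-2d}$, and then the two-dimensional Gowers--Cauchy--Schwarz inequality bounds this by the box norm $\|(-1)^{\lambda\cdot\alpha}\|_\square = 2^{-\operatorname{rank}(\lambda\cdot\alpha)/4}$. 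Your direct estimate avoids the fourth-power loss entirely, so it actually proves the conclusion already under the weaker hypothesis $m \geq r + 2d + \log_2\varepsilon^{-1}$; the paper's constants $4r+8d+4\log_2\varepsilon^{-1}$ are an artifact of the box-norm route. The GCS approach is more generic (it does not require the character sum over $C$ to be evaluated in closed form, and it would survive replacing the coset $C$ by a less structured subset), but for this particular lemma both methods work and yours is the sharper of the two. The one step worth spelling out carefully in a polished write-up is the passage from a single point $u_0$ in the solution set $\{u\in W : \mu(v)+\gamma_0(u,v)\equiv 0\}$ to the claim that the whole set is a coset of the left radical of $\gamma_0$, and hence has size exactly $|W|/2^{\operatorname{rank}\gamma_0}$; you state this correctly, but it is the crux of the computation and deserves a sentence of justification.
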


The reason for calling this lemma an algebraic counting lemma is that it is closely related to the more traditional counting results. For example, we can easily deduce that for any $\nu_1, \nu_2, \nu_3 \in \mathbb{F}_2^r$ the number of triples $x,y,z \in \mathbb{F}_2^n$ such that $\alpha(x,y) = \nu_1$, $\alpha(y,z) = \nu_2$ and $\alpha(z, x) = \nu_3$ is approximately $2^{-3r} \cdot 2^{3n}$ as long as $\alpha$ is sufficiently quasirandom. In the graph-theoretic language, this corresponds to counting triangles. Similarly, Lemma~\ref{bilRegularityLemma} is related to Szemer\'edi regularity lemma.

\begin{proof}Suppose that the restriction $\alpha|_{C \times C}$ takes the value $u \in \mathbb{F}_2^r$ at most $(1 - \varepsilon)2^{-r}|C|^2$ times. Then we have
\begin{align*}(1 - \varepsilon)2^{-r-2d} \geq & \exx_{x,y \in \mathbb{F}_2^n} \id_C(x)\id_C(y) \id(\alpha(x,y) = u) = \exx_{x,y \in \mathbb{F}_2^n} \id_C(x)\id_C(y) \exx_{\lambda \in \mathbb{F}_2^r} (-1)^{\lambda \cdot (\alpha(x,y) - u)}\\
= & 2^{-r} \exx_{x,y \in \mathbb{F}_2^n} \id_C(x)\id_C(y) + 2^{-r} \sum_{\lambda \in \mathbb{F}_2^r \setminus \{0\}} \exx_{x,y \in \mathbb{F}_2^n} \id_C(x)\id_C(y) (-1)^{\lambda \cdot (\alpha(x,y) - u)}\\
= & 2^{-r - 2d} + 2^{-r} \sum_{\lambda \in \mathbb{F}_2^r \setminus \{0\}} (-1)^{- \lambda \cdot u}\exx_{x,y \in \mathbb{F}_2^n} \id_C(x)\id_C(y) (-1)^{\lambda \cdot \alpha(x,y)}.\end{align*}
Using the triangle inequality, we see that
\[2^{-r} \sum_{\lambda \in \mathbb{F}_2^r \setminus \{0\}} \bigg|\exx_{x,y \in \mathbb{F}_2^n} \id_C(x)\id_C(y) (-1)^{\lambda \cdot \alpha(x,y)}\bigg| \geq \varepsilon 2^{-r-2d},\]
so by averaging we obtain a non-zero $\lambda$ such that 
\[\bigg|\exx_{x,y \in \mathbb{F}_2^n} \id_C(x)\id_C(y) (-1)^{\lambda \cdot \alpha(x,y)}\bigg| \geq \varepsilon 2^{-r-2d}.\]
By the two-dimensional Gowers-Cauchy-Schwarz inequality, the left hand side can be bounded above by the box norm of $(-1)^{\lambda \cdot \alpha}$, so 
\[\varepsilon^4 2^{-4r-8d} \leq \Big\|(-1)^{\lambda \cdot \alpha}\Big\|_{\square}^4 = \exx_{x,y \in \mathbb{F}_2^n} (-1)^{\lambda \cdot \alpha(x,y)},\]
which equals $2^{-\on{rank}(\lambda \cdot \alpha)}$ by Lemma~\ref{rankscharlemma}\textbf{(i)}. Hence $\varepsilon^4 2^{-4r-8d} \leq 2^{-m}$ so $m \leq 4r + 8d + 4 \log_2\varepsilon^{-1}$, which is a contradiction.\end{proof}

\section{Example}
Recall that $n$ is a fixed positive integer, which we think of as large, and that $G = \mathbb{F}_2^n$. Let $e_1, \dots, e_n$ be the standard basis of $G$. Our example will be the multilinear form $\phi \colon G^4 \to \mathbb{F}_2$ defined as
\[\phi(x, y, z, w) = \sum_{1 \leq i < j \leq n} \Big(x_i y_j z_j w_i + x_j y_i z_j w_i + x_j y_j z_i w_i\Big),\]
where the coordinates of vectors are taken with the respect to the fixed basis $e_1, \dots, e_n$. We first show that $\phi$ is approximately symmetric.

\begin{lemma}The multilinear form $\phi$ satisfies $\phi = \phi \circ (1\,\,2)$, $\phi = \phi \circ (1\,\,3)$ and
\[\phi(x, y, z, w) = \phi \circ (1\,\,4)(x, y, z, w) + \rho(x,y) \rho(z,w) + \rho(x,z) \rho(y,w)\]
where
\[\rho(x,y) = \sum_{i \in [n]} x_i y_i.\]\end{lemma}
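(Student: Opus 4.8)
The three asserted identities are all purely combinatorial statements about the explicit sum defining $\phi$, so the plan is to verify each one by tracking what the relevant coordinate permutation does to a generic term $x_a y_b z_c w_d$ in the expansion. Writing $\phi(x,y,z,w) = \sum_{i<j}\big(x_iy_jz_jw_i + x_jy_iz_jw_i + x_jy_jz_iw_i\big)$, I would organize the proof around the observation that each term has the shape (variable in position $i$)(variable in position $j$)(variable in position $j$)(variable in position $i$) or similar, and that the index pattern $\{i,i,j,j\}$ with $i<j$ across the four slots $(x,y,z,w)$ is what gets permuted.

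**Step 1: $\phi = \phi\circ(1\,2)$ and $\phi = \phi\circ(1\,3)$.** For the transposition $(1\,2)$, which swaps the roles of $x$ and $y$, I would note that the three summands for a fixed pair $i<j$ are $x_iy_jz_jw_i$, $x_jy_iz_jw_i$, $x_jy_jz_iw_i$; swapping $x\leftrightarrow y$ sends these to $y_ix_jz_jw_i = x_jy_iz_jw_i$, $y_jx_iz_jw_i = x_iy_jz_jw_i$, and $y_jx_jz_iw_i = x_jy_jz_iw_i$. So the first two summands are exchanged and the third is fixed; the sum over all $i<j$ is therefore invariant. For $(1\,3)$, swapping $x\leftrightarrow z$ sends the three summands to $z_iy_jx_jw_i$, $z_jy_ix_jw_i$, $z_jy_jx_iw_i$, i.e.\ to $x_jy_jz_iw_i$, $x_jy_iz_jw_i$, $x_iy_jz_jw_i$ respectively; again the first and third summands swap and the middle one is fixed, so the total sum is unchanged. (One should double-check there is no subtlety from $i<j$ versus $j<i$: here the swaps keep the index pair $\{i,j\}$ intact, only reshuffling which variable carries which index, so no reindexing is needed.)

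**Step 2: the $(1\,4)$ identity.** This is the substantive one. Swapping $x\leftrightarrow w$ sends the three summands $x_iy_jz_jw_i, x_jy_iz_jw_i, x_jy_jz_iw_i$ to $w_iy_jz_jx_i, w_jy_iz_jx_i, w_jy_jz_ix_i$, i.e.\ to $x_iy_jz_jw_i$, $x_iy_iz_jw_j$, $x_iy_jz_iw_j$. So $\phi\circ(1\,4)(x,y,z,w) = \sum_{i<j}\big(x_iy_jz_jw_i + x_iy_iz_jw_j + x_iy_jz_iw_j\big)$. Then $\phi + \phi\circ(1\,4)$ is $\sum_{i<j}\big(x_jy_iz_jw_i + x_jy_jz_iw_i + x_iy_iz_jw_j + x_iy_jz_iw_j\big)$ (the common $x_iy_jz_jw_i$ term cancels over $\mathbb{F}_2$). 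Now I would match this against the target $\rho(x,y)\rho(z,w) + \rho(x,z)\rho(y,w) = \big(\sum_a x_ay_a\big)\big(\sum_b z_bw_b\big) + \big(\sum_a x_az_a\big)\big(\sum_b y_bw_b\big) = \sum_{a,b}\big(x_ay_az_bw_b + x_az_ay_bw_b\big)$. Splitting each double sum over ordered pairs $(a,b)$ into the diagonal $a=b$ and the two off-diagonal triangles $a<b$, $a>b$: the diagonal contributes $\sum_a\big(x_ay_az_aw_a + x_az_ay_aw_a\big) = 0$ over $\mathbb{F}_2$; the $a>b$ triangle of the first term is $\sum_{i<j}x_jy_jz_iw_i$ and the $a<b$ triangle is $\sum_{i<j}x_iy_iz_jw_j$; the $a>b$ triangle of the second term is $\sum_{i<j}x_jz_jy_iw_i = \sum_{i<j}x_jy_iz_jw_i$ and the $a<b$ triangle is $\sum_{i<j}x_iz_iy_jw_j = \sum_{i<j}x_iy_jz_iw_j$. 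Collecting, $\rho(x,y)\rho(z,w)+\rho(x,z)\rho(y,w) = \sum_{i<j}\big(x_jy_jz_iw_i + x_iy_iz_jw_j + x_jy_iz_jw_i + x_iy_jz_iw_j\big)$, which is exactly $\phi + \phi\circ(1\,4)$ computed above. Rearranging gives the claimed identity.

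**Expected obstacle.** There is no conceptual difficulty here; the only real risk is bookkeeping errors — in particular keeping straight, for each transposition, which of the three summands is fixed and which two are swapped, and correctly splitting the $\rho$-products' double sums into diagonal and the two oriented off-diagonal triangles and relabelling indices so everything is indexed by $i<j$. I would present Step 2 as the main computation, laying out $\phi\circ(1\,4)$ explicitly, then expanding $\rho(x,y)\rho(z,w)+\rho(x,z)\rho(y,w)$ by the diagonal/triangle decomposition, and finally observing the term-by-term match over $\mathbb{F}_2$.
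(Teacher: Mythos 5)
Your proof is correct and takes essentially the same approach as the paper: for the transpositions $(1\,2)$ and $(1\,3)$ the paper simply observes the summand is symmetric in $x,y,z$ (which is what your term-by-term swap checks), and for $(1\,4)$ the paper computes $\phi + \phi\circ(1\,4)$ over $i<j$, aggregates into full double sums $i,j\in[n]$ (noting the diagonal contributes a multiple of $2$), and factors into $\rho$-products. You run the same manipulation in the opposite direction — expanding the $\rho$-products, peeling off the diagonal, and splitting into triangles indexed by $i<j$ — but it is the same calculation.
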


\begin{proof}For each $i$ and $j$ the expression $x_i y_j z_j w_i + x_j y_i z_j w_i + x_j y_j z_i w_i$ is symmetric in $x,y$ and $z$ and hence the equalities $\phi = \phi \circ (1\,\,2)$ and $\phi = \phi \circ (1\,\,3)$ follow immediately. For the remaining transposition $(1\,\,4)$ we perform some algebraic manipulation
\begin{align*}\phi(x, y, z, w) + \phi(w, y, z, x) =& \sum_{1 \leq i < j \leq n} \Big((x_i y_j z_j w_i + x_j y_i z_j w_i + x_j y_j z_i w_i) + (w_i y_j z_j x_i + w_j y_i z_j x_i + w_j y_j z_i x_i)\Big) \\
= &\sum_{1 \leq i < j \leq n} \Big(x_j y_i z_j w_i + x_j y_j z_i w_i + x_i y_i z_j w_j + x_i y_j z_i w_j\Big)\\
= &\sum_{1 \leq i < j \leq n} \Big(x_j y_i z_j w_i +  x_i y_j z_i w_j\Big) +  \sum_{1 \leq i < j \leq n} \Big(x_j y_j z_i w_i + x_i y_i z_j w_j\Big)\\
= &\Big(\sum_{i,j \in [n] \colon i \not= j} x_j y_i z_j w_i \Big) + \Big(\sum_{i,j \in [n] \colon i \not= j} x_j y_j z_i w_i \Big)\\
= &\Big(\sum_{i,j \in [n] \colon i \not= j} x_j y_i z_j w_i \Big) + \Big(\sum_{i,j \in [n] \colon i \not= j} x_j y_j z_i w_i \Big) + 2 \cdot \Big(\sum_{i \in [n]} x_i y_i z_i w_i\Big)\\
= &\Big(\sum_{i,j \in [n]} x_j y_i z_j w_i \Big) + \Big(\sum_{i,j \in [n]} x_j y_j z_i w_i \Big)\\
= &\Big(\sum_{j \in [n]} x_j z_j \Big) \Big(\sum_{i \in [n]} y_i w_i \Big) + \Big(\sum_{j \in [n]} x_j y_j \Big) \Big(\sum_{i \in [n]} z_i w_i \Big)\\
= & \rho(x,z) \rho(y,w) + \rho(x,y) \rho(z,w),\end{align*}
as desired.\end{proof}

Since the transpositions $(1\,\,2), (1\,\,3)$ and $(1\,\,4)$ generate the whole symmetric group $\on{Sym}_4$ we immediately deduce that $\phi$ is 3-approximately symmetric.

\begin{corollary}For any permutation $\pi \in \on{Sym}_4$ we have $\on{prank} \Big(\phi + \phi \circ \pi\Big) \leq 3$.\end{corollary}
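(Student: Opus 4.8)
The plan is to exploit the symmetry of $\phi$ in its first three arguments to reduce the claim, for an arbitrary $\pi\in\on{Sym}_4$, to the three transpositions $(1\,\,4)$, $(2\,\,4)$, $(3\,\,4)$, each of which is controlled by the identity in the previous lemma.

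First I would observe that $\phi$ is invariant under the whole subgroup $H=\on{Sym}_{\{1,2,3\}}$ of permutations fixing $4$: it is generated by $(1\,\,2)$ and $(1\,\,3)$, and since $\phi\circ(1\,\,2)=\phi\circ(1\,\,3)=\phi$ and composing functions with coordinate permutations satisfies $\phi\circ(\sigma\pi)=(\phi\circ\sigma)\circ\pi$ (which follows from the action computation in the Preliminaries), it follows that $\phi\circ h=\phi$ for all $h\in H$. Hence $\phi\circ\pi$ depends only on $j:=\pi^{-1}(4)$: if $\pi$ fixes $4$ then $\pi\in H$ and $\phi+\phi\circ\pi=0$; if $j\in\{1,2,3\}$ then $\pi(j\,\,4)$ fixes $4$, so $\pi=h\,(j\,\,4)$ for some $h\in H$ and therefore $\phi\circ\pi=(\phi\circ h)\circ(j\,\,4)=\phi\circ(j\,\,4)$. (One may also see this directly: $\phi\circ\pi(x_1,\dots,x_4)$ is just $\phi$ evaluated at the three coordinates $x_i$ with $i\neq\pi^{-1}(4)$ in some order, followed by $x_{\pi^{-1}(4)}$, and $\phi$ is symmetric in its first three slots.)

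It then remains to bound $\on{prank}(\phi+\phi\circ(j\,\,4))$ for $j=1,2,3$. For $j=1$ the previous lemma already gives $\phi+\phi\circ(1\,\,4)=\rho(x,y)\rho(z,w)+\rho(x,z)\rho(y,w)$, a sum of two products of bilinear forms, hence of partition rank at most $2$. For $j=2$ and $j=3$ I would move the relevant coordinate into the first slot using symmetry in the first three variables and then apply the same identity: for instance $\phi(x,y,z,w)+\phi\circ(2\,\,4)(x,y,z,w)=\phi(x,y,z,w)+\phi(x,w,z,y)=\phi(y,x,z,w)+\phi(w,x,z,y)$, which by the lemma equals $\rho(y,x)\rho(z,w)+\rho(y,z)\rho(x,w)$, again of partition rank at most $2$; the case $j=3$ is the same with the second and third slots interchanged. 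Since partition rank is unchanged under relabelling arguments, we conclude $\on{prank}(\phi+\phi\circ\pi)\le 2\le 3$ for every $\pi\in\on{Sym}_4$. There is no real obstacle here; the only place that needs care is the bookkeeping with the $\on{Sym}_4$-action — the direction of composition and the passage to inverses — which is precisely what the computation in the Preliminaries handles, so I would carry out the reduction to the transpositions $(j\,\,4)$ first and then the three short manipulations.
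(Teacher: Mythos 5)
Your proof is correct, and it takes a genuinely different route from the paper's. The paper introduces the three-dimensional space $V$ spanned by $\rho(x,y)\rho(z,w)$, $\rho(x,z)\rho(y,w)$, $\rho(x,w)\rho(y,z)$, notes that $V$ is $\on{Sym}_4$-invariant because $\rho$ is symmetric, writes a general $\pi$ as a word $\tau_1\cdots\tau_r$ in the transpositions $(1\,\,2),(1\,\,3),(1\,\,4)$, and uses the telescoping identity $\phi\circ\pi+\phi=\sum_i(\phi\circ\tau_i+\phi)\circ\tau_{i+1}\circ\cdots\circ\tau_r$ to conclude $\phi+\phi\circ\pi\in V$, hence $\on{prank}\leq 3$. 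You instead exploit the \emph{exact} invariance of $\phi$ under the point stabiliser $\on{Sym}_{\{1,2,3\}}$ to collapse the problem to the three transpositions $(j\,\,4)$, and then transport the single identity for $(1\,\,4)$ by symmetry of the first three slots. This is more economical and in fact yields the sharper bound $\on{prank}\leq 2$: the paper's telescoping can a priori accumulate all three basis elements of $V$, whereas your reduction never produces more than two products of bilinear forms. The trade-off is that the paper's argument is more generic — it only uses that $\phi+\phi\circ\tau_i$ lies in an invariant low-prank space for each generator $\tau_i$, so it would survive if $\phi$ were only approximately symmetric in the first three variables — while yours leans on the exact identities $\phi=\phi\circ(1\,\,2)=\phi\circ(1\,\,3)$. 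For this particular $\phi$ both are valid, and your version is the cleaner of the two.
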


\begin{proof}Let $V$ be the vector space consisting of the multilinear forms on $G^4$ of the shape $\lambda_1 \rho(x,y) \rho(z,w) + \lambda_2\rho(x,z) \rho(y,w) + \lambda_3 \rho(x,w) \rho(y,z)$ for some scalars $\lambda_1, \lambda_2, \lambda_3 \in \mathbb{F}_2$. Since $\rho$ is a symmetric bilinear form it follows that $V$ is invariant under the action of $\on{Sym}_4$. The lemma above shows in particular that $\phi + \phi \circ \pi \in V$ whenever $\pi$ is one of the transpositions $(1\,\,2), (1\,\,3)$ and $(1\,\,4)$. Let $\pi \in \on{Sym}_4$ now be an arbitrary permutation. The transpositions $(1\,\,2), (1\,\,3)$ and $(1\,\,4)$ generate $\on{Sym}_4$ so we can write $\pi = \tau_1 \circ \tau_2 \circ \dots \circ \tau_r$ for some $\tau_1, \dots, \tau_r \in \{(1\,\,2), (1\,\,3), (1\,\,4)\}$. Then we have
\begin{align*}\phi \circ \pi + \phi = &\sum_{i \in [r]}\Big(\phi \circ \tau_i \circ \tau_{i+1} \circ \dots \circ \tau_r + \phi \circ \tau_{i+1} \circ \tau_{i+2} \circ \dots \circ \tau_r\Big)\\ 
= &\sum_{i \in [r]} \Big(\phi \circ \tau_i + \phi\Big) \circ \tau_{i+1} \circ \dots \circ \tau_r\end{align*}
which is a sum of $r$ forms, each of which is a member of $V$. Hence  $\phi + \phi \circ \pi \in V$ for all $\pi \in \on{Sym}_4$. Since the partition rank of forms in $V$ is at most 3, the proof is complete.\end{proof}

In the rest of this section, we show that the map $\phi$ is necessarily far from any symmetric multilinear form. 

\begin{theorem}Let $r \geq 1$ be a positive integer. Assume that $n \geq (1000r)^3$. For any symmetric multilinear form $\sigma \colon G^4 \to \mathbb{F}_2$ we have $\on{prank}(\phi - \sigma) > r$.
\end{theorem}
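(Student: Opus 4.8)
The plan is to argue by contradiction: suppose $\on{prank}(\phi - \sigma) \leq r$ for some symmetric multilinear form $\sigma$. The key structural fact to exploit is that $\phi$ fails to be symmetric in a very controlled way — specifically, $\phi + \phi \circ (1\,\,4) = \rho(x,y)\rho(z,w) + \rho(x,z)\rho(y,w)$, where $\rho$ has rank $n$. Since $\sigma$ is symmetric, $\sigma = \sigma \circ (1\,\,4)$, so writing $\psi = \phi - \sigma$ (a multilinear form of partition rank at most $r$) we get
\[\psi(x,y,z,w) + \psi(w,y,z,x) = \phi(x,y,z,w) + \phi(w,y,z,x) = \rho(x,y)\rho(z,w) + \rho(x,z)\rho(y,w).\]
So the right-hand side, a bilinear-times-bilinear expression built from the high-rank form $\rho$, must be expressible as a sum of at most $2r$ terms each of which is a product of two multilinear forms splitting the four variables into two pairs (from the partition-rank decomposition of $\psi$, symmetrised under $(1\,\,4)$). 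I would then fix generic values of $y$ and $z$ and regard both sides as bilinear forms in $x$ and $w$, so that the identity becomes a statement about rank of bilinear forms: for most $(y,z)$, the bilinear form $(x,w) \mapsto \rho(x,y)\rho(z,w) + \rho(x,z)\rho(y,w)$ — which I will argue has rank close to $2$ for generic $(y,z)$, being a sum of two rank-one forms $x \mapsto \rho(x,y)$ times $w \mapsto \rho(z,w)$ etc. — should be reconciled with the much more restrictive structure coming from $\psi$. This is where the bilinear regularity method enters.

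Here is where I would deploy Lemma~\ref{bilRegularityLemma} and Lemma~\ref{bilCountingLemma}. The partition-rank-$\leq r$ decomposition of $\psi$ produces a bounded number (at most $r$) of auxiliary bilinear forms obtained by freezing two of the four coordinates; together with $\rho$ these give $O(r)$ bilinear forms on $G \times G$. Applying the bilinear regularity lemma with $m$ chosen of order $n^{1/3}$ (which is affordable since $\on{rank}\rho = n \geq (1000r)^3 \gg (4r+1)m$), I pass to a subspace $U' \leq G$ of codimension $O(rm) = O(r \cdot n^{1/3})$ on which $\rho|_{U' \times U'}$ still has rank $\geq n - O(rm) \geq m$ and on which every non-zero linear combination of the auxiliary forms and $\rho$ has rank $\geq m$. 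On this subspace the counting lemma then tells us that $\rho|_{U' \times U'}$ is far from degenerate: its values equidistribute, and in particular we can find many $(y,z) \in U' \times U'$ on which $x \mapsto \rho(x,y)$ and $w \mapsto \rho(z,w)$ are linearly independent linear functionals, so the "defect" bilinear form on the right-hand side genuinely has rank $2$. The contradiction should come from comparing this with the fact that, on $U' \times U'$ after freezing $y,z$, the form $\psi(x,y,z,w) + \psi(w,y,z,x)$ lies in the span of the few regularised forms $\alpha_1,\dots,\alpha_s$ and $\rho$ — but the right-hand side $\rho(x,y)\rho(z,w) + \rho(x,z)\rho(y,w)$, being a nonzero low-rank form that varies with $(y,z)$ in a way not captured by a bounded-dimensional space of bilinear forms, cannot consistently lie there. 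Roughly, one counts the number of $(y,z)$ realising each possible value of the frozen bilinear form and derives a dimension/counting inequality that $n \geq (1000r)^3$ violates.

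The main obstacle I anticipate is making the last comparison precise: extracting a genuine numerical contradiction from "the $(1\,\,4)$-symmetrised part of a partition-rank-$r$ form equals $\rho(x,y)\rho(z,w)+\rho(x,z)\rho(y,w)$." The subtlety is that the partition-rank decomposition of $\psi$ may split variables as $\{x,w\}$ versus $\{y,z\}$ — precisely the "bad" split that makes $\psi(x,y,z,w)+\psi(w,y,z,x)$ potentially look like the right-hand side without forcing high rank on $\rho$. So I expect the real work is to show that such a split still cannot reproduce $\rho(x,y)\rho(z,w)+\rho(x,z)\rho(y,w)$ for a high-rank $\rho$: a term of the form $\beta(x,w)\gamma(y,z)$ contributes, after $(1\,\,4)$-symmetrisation, $(\beta(x,w)+\beta(w,x))\gamma(y,z)$, i.e. the antisymmetric part of $\beta$ times $\gamma$, and one must rule out that a bounded sum of such products — plus products splitting $\{x,y\}|\{z,w\}$ and $\{x,z\}|\{y,w\}$ and $\{x,w\}|\{y,z\}$ in the $\leq r$ allotted terms — can equal the target. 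I would handle this by freezing $y, z$ to reduce to bilinear forms in $(x,w)$ and invoking Lemma~\ref{rankscharlemma} and Lemma~\ref{rankdecsub} to track ranks through the restriction to $U'$, so that a bounded-rank obstruction on one side is forced to match an honestly rank-$2$, $(y,z)$-dependent form on the other, over a set of $(y,z)$ large enough (by the counting lemma) that a pigeonhole on the bounded list $\alpha_1,\dots,\alpha_s,\rho$ fails unless $r$ is comparable to $n^{1/3}$.
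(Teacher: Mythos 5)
There is a genuine gap, and it is structural rather than a matter of making the estimates precise. Your entire plan rests on the single relation
\[
\psi(x,y,z,w)+\psi(w,y,z,x)=\rho(x,y)\rho(z,w)+\rho(x,z)\rho(y,w),
\qquad \psi=\phi-\sigma,\ \on{prank}\psi\le r,
\]
and you try to derive a contradiction from this one identity together with low partition rank of $\psi$. But that constraint by itself is not contradictory for small $r$: take $\psi_0(x,y,z,w)=\rho(x,y)\rho(z,w)$. Since $\rho$ is symmetric, $\psi_0(w,y,z,x)=\rho(w,y)\rho(z,x)=\rho(x,z)\rho(y,w)$, so $\psi_0+\psi_0\circ(1\,4)$ equals exactly the right-hand side, yet $\on{prank}\psi_0=1$. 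So the $(1\,4)$-symmetrised equation admits partition-rank-$1$ solutions, and no amount of regularising, freezing $(y,z)$, or rank counting applied to that equation alone can yield a lower bound on $r$ beyond a constant. Your sub-claim that the right-hand side ``varies with $(y,z)$ in a way not captured by a bounded-dimensional space'' fails for the same reason: after freezing $(y,z)$, the regularised left-hand side does not lie in a fixed bounded-dimensional space of bilinear forms in $(x,w)$ with fixed coefficients — the coefficients are themselves bilinear in $(y,z)$ (e.g.\ terms like $\gamma^1_i(x,y)\tilde\gamma^1_i(z,w)$ give rank-one contributions $\gamma^1_i(\cdot,y)\otimes\tilde\gamma^1_i(z,\cdot)$ whose directions sweep over many lines as $(y,z)$ moves), so both sides vary comparably.

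What is missing is the simultaneous use of all three relations $\phi=\phi\circ(1\,2)$, $\phi=\phi\circ(1\,3)$, and $\phi+\phi\circ(1\,4)=\rho(x,y)\rho(z,w)+\rho(x,z)\rho(y,w)$, each giving its own identity for the regularised decomposition. The paper regularises the bilinear forms appearing in a partition-rank-$\le r$ expression for $\phi-\sigma$, and then feeds \emph{all three} symmetry identities through the counting lemma. The $(1\,2)$ and $(1\,3)$ identities force the coefficient matrices $\lambda,\lambda',\lambda''$ of the $\alpha_i\otimes\alpha_j$ block to be symmetric and equal, so that block becomes a genuinely symmetric quadrilinear form and can be absorbed into $\sigma$. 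The $(1\,2)$ and $(1\,3)$ identities are then used a second time to show the remaining $\rho\otimes\beta_k$ block is also, modulo $\langle\rho\rangle$, a single symmetric $\beta$, which likewise absorbs into $\sigma$. Only after all this is the leftover reduced to $\lambda_1\rho\rho+\lambda_2\rho\rho+\lambda_3\rho\rho$, and the $(1\,2)$ and $(1\,3)$ relations force $\lambda_1=\lambda_2=\lambda_3$, implying $\phi=\phi\circ(1\,4)$ on a large subspace, which finally contradicts the $(1\,4)$ relation because $\rho$ still has rank $\ge 2$ there. Without the $(1\,2)$ and $(1\,3)$ steps you cannot rule out exactly the $\psi_0$-type solutions that make your version of the argument collapse.
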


\begin{proof}Let $\sigma \colon G^4 \to \mathbb{F}_2$ be a symmetric multilinear form. Suppose on the contrary that $\on{prank}(\phi - \sigma) \leq r$. Then we may find linear forms $u^1_1, \dots,$ $u^1_r,$ $\dots,$ $u^4_1, \dots, u^4_r$, bilinear forms $\gamma^1_1, \dots,$ $\gamma^1_r, \dots,$ $\gamma^3_1, \dots, \gamma^3_r$ and $\tilde{\gamma}^1_1, \dots,$ $\tilde{\gamma}^1_r, \dots,$ $\tilde{\gamma}^3_1, \dots, \tilde{\gamma}^3_r$, and trilinear forms $\theta^1_1, \dots,$ $\theta^1_r, \dots,$ $\theta^4_1, \dots, \theta^4_r$ (we set some of the forms to be 0 to have a single parameter $r$ instead of a separate count for each sequence of forms) on the space $G$ such that 
\begin{align*}\phi(x,y,z,w) = \sigma(x,y,z,w) + &\sum_{i \in [r]} u^1_i(x) \theta^1_i(y,z,w) + \sum_{i \in [r]} u^2_i(y) \theta^2_i(x,z,w)\\
&\hspace{5cm}  + \sum_{i \in [r]} u^3_i(z) \theta^3_i(x,y,w)  + \sum_{i \in [r]} u^4_i(w) \theta^4_i(x, y,z)\\
+& \sum_{i \in [r]}\gamma^1_i(x,y) \tilde{\gamma}^1_i(z,w) + \sum_{i \in [r]}\gamma^2_i(x,z) \tilde{\gamma}^2_i(y,w) + \sum_{i \in [r]}\gamma^3_i(x,w) \tilde{\gamma}^3_i(y,z)\end{align*}
holds for all $x,y,z,w \in G$. Let us pass to the subspace $U = \{x \in G \colon (\forall d \in [4])(\forall i \in [r]) u^d_i(x) = 0\}$ which has codimension at most $4r$. When $x,y,z,w \in U$ then 
\[\phi(x,y,z,w) = \sigma(x,y,z,w) + \sum_{i \in [r]}\gamma^1_i(x,y) \tilde{\gamma}^1_i(z,w) + \sum_{i \in [r]}\gamma^2_i(x,z) \tilde{\gamma}^2_i(y,w) + \sum_{i \in [r]}\gamma^3_i(x,w) \tilde{\gamma}^3_i(y,z).\]

In the rest of proof we deal with bilinear forms primarily so we simply use the word rank instead of partition rank as the usual notion of rank is equivalent to the partition rank, as remarked earlier. We now gather all $6r$ bilinear forms $\gamma^1_1, \dots, \tilde{\gamma}^3_r$ above into a single sequence and apply the bilinear regularity lemma. Let $m \geq 1$ be an integer to be chosen later. Assume that $n \geq 24rm + 8r + m$ so that $\on{rank} \rho|_{U \times U} \geq 24rm + m$ (recall that $\on{rank} \rho = n$). Lemma~\ref{bilRegularityLemma} allows us to find a subspace $U' \leq U$ of codimension at most $12rm$ inside $U$, a positive integer $s \leq 6r$, bilinear forms $\alpha_1, \dots, \alpha_s \colon U' \times U' \to \mathbb{F}_2$ such that every map among $\gamma^1_1, \dots, \tilde{\gamma}^3_r$ equals a linear combination of $\alpha_1, \dots, \alpha_s$ and $\rho|_{U' \times U'}$ on $U' \times U'$ and all non-zero linear combinations of $\alpha_1, \dots, \alpha_s$ and $\rho|_{U' \times U'}$ have rank at least $m$.\\ 
\indent Expressing maps $\gamma^1_1, \dots, \tilde{\gamma}^3_r$ in terms of these bilinear forms, we conclude that whenever $x,y,z,w \in U'$
\begin{align}\phi(x,y,z,w) = \sigma(x,y,z,w) + \sum_{i,j \in [s]} \lambda_{ij} \alpha_i(x,y) \alpha_j(z,w) + \sum_{i,j \in [s]} \lambda'_{ij} \alpha_i(x,z) \alpha_j(y,w) + \sum_{i,j \in [s]} \lambda''_{ij} \alpha_i(x,w) \alpha_j(y,z)\nonumber\\
+ \rho(x,y) \beta_1(z,w) + \rho(x,z) \beta_2(y,w) + \rho(x,w) \beta_3(y,z) + \rho(y,z) \beta_4(x,w) + \rho(y,w) \beta_5(x,z) + \rho(z,w) \beta_6(x,y)\label{PhiDecomposition1}\end{align}
where $\lambda_{ij}, \lambda'_{ij}, \lambda''_{ij} \in \mathbb{F}_2$ are suitable coefficients and $\beta_1, \dots, \beta_6 \colon U' \times U' \to \mathbb{F}_2$ are suitable bilinear forms.\\

We now use the approximate symmetry properties of $\phi$ in order to deduce that the coefficients $\lambda_{ij}, \lambda'_{ij}, \lambda''_{ij}$ are symmetric in $i$ and $j$ and that forms $\alpha_i$ are essentially symmetric. This will eventually allows us to simplify the expression in~\eqref{PhiDecomposition1}. The identity $\phi + \phi \circ (1\,\,2) = 0$ will be used to prove the following claim.

\begin{claim}\label{partialsymmetryClaim1} Assume $m \geq 40(s + 1)$.
\begin{itemize}
\item[\textbf{(i)}] There exists a subspace $V^1 \leq U'$ such that $\dim V^1 \geq \dim U' - 2s^2 - 5s$ and for each $j \in [s]$ the bilinear form $\sum_{i \in [s]}\lambda_{ij} \alpha_i$ is symmetric on $V^1 \times V^1$.
\item[\textbf{(ii)}] For all $i,j \in [s]$, $\lambda'_{ij} = \lambda''_{ji}$.
\end{itemize}
\end{claim}
\begin{proof} Using the fact that $\phi + \phi \circ (1\,\,2) = 0$ and~\eqref{PhiDecomposition1} we get
\begin{align}0 = &\sum_{i,j \in [s]} \lambda_{ij} (\alpha_i(x,y) + \alpha_i(y,x)) \alpha_j(z,w) + \sum_{i,j \in [s]} (\lambda'_{ij}+ \lambda''_{ji}) \alpha_i(x,z) \alpha_j(y,w) + \sum_{i,j \in [s]} (\lambda''_{ij} + \lambda'_{ji})\alpha_i(x,w) \alpha_j(y,z)\nonumber\\
&\hspace{2cm}+\rho(x,z) (\beta_2(y,w) + \beta_4(y,w)) + \rho(x,w) (\beta_3(y,z) + \beta_5(y,z)) + \rho(y,z) (\beta_4(x,w) + \beta_2(x,w))\nonumber\\
&\hspace{4cm} + \rho(y,w) (\beta_5(x,z) + \beta_3(x,z)) + \rho(z,w) (\beta_6(x,y) + \beta_6(y,x))\label{12decompositionsymmID}\end{align}
for all $x,y,z,w \in U'$.\\

\noindent\textbf{Proof of (i).} Let $j \in [s]$ be given. Since any non-zero linear combination of $\rho|_{U' \times U'}, \alpha_1, \dots, \alpha_s$ has rank at least $m > 4(s + 2)$ by Lemma~\ref{bilCountingLemma} we can find $z,w \in U'$ such that $\rho(z,w) = 0$, $\alpha_i(z,w) = 0$ for all $i \not= j$ and $\alpha_j(z,w) = 1$. Using this choice of $z,w$ in~\eqref{12decompositionsymmID} we obtain
\begin{align*}&\sum_{i \in [s]}\lambda_{ij} (\alpha_i(x,y) + \alpha_i(y,x)) = \sum_{i \in [s]}\alpha_i(x,z) \Big(\sum_{\ell \in [s]} (\lambda'_{i \ell}+ \lambda''_{\ell i})  \alpha_\ell(y,w)\Big) + \sum_{i \in [s]} \alpha_i(x,w)  \Big(\sum_{\ell \in [s]} (\lambda''_{i\ell} + \lambda'_{\ell i})\alpha_\ell(y,z)\Big)\\
&\hspace{2cm}+\rho(x,z) (\beta_2(y,w) + \beta_4(y,w)) + \rho(x,w) (\beta_3(y,z) + \beta_5(y,z)) + \rho(y,z) (\beta_4(x,w) + \beta_2(x,w))\\
&\hspace{4cm} + \rho(y,w) (\beta_5(x,z) + \beta_3(x,z)) + \rho(z,w) (\beta_6(x,y) + \beta_6(y,x))\end{align*}
for all $x,y \in U'$. By Lemma~\ref{rankscharlemma}\textbf{(iii)} we conclude that $\sum_{i \in [s]}\lambda_{ij} (\alpha_i(x,y) + \alpha_i(y,x))$ has rank at most $2s + 5$, so by Lemma~\ref{lowranktozero} we may find a subspace $U'_j \leq U'$ with $\dim U'_j \geq \dim U' - 2s-5$ such that $\sum_{i \in [s]}\lambda_{ij} (\alpha_i(x,y) + \alpha_i(y,x)) = 0$ when $x,y \in U'_j$. We may take $V^1 = \cap_{j \in [s]} U'_j$.\\

\noindent\textbf{Proof of (ii).} Let $i, j \in [s]$ be given. This time we find elements $x,y,z,w \in U'$ such that $\alpha_1, \dots, \alpha_s$ and $\rho$ are equal to 0 at all 6 points in the set $\{(x,y), (x,z), (x,w), (y,z), (y,w), (z,w)\}$, with the two exceptions being $\alpha_i(x,z) = \alpha_j(y,w) = 1$. Once we have such elements $x,y,z,w$ we use them in~\eqref{12decompositionsymmID} which reduces to just $\lambda'_{ij}+ \lambda''_{ji} = 0$, proving that $\lambda'_{ij} = \lambda''_{ji}$, which is what we are after. To obtain such a quadruple of elements $x,y,z,w$, provided $m > 4(s + 2)$, we first apply Lemma~\ref{bilCountingLemma} to obtain $(x,z) \in U' \times U'$ such that $\rho(x,z) = 0$, $\alpha_\ell(x,z) = 0$ when $\ell \not= i$ and $\alpha_i(x,z) = 1$. Then we define a subspace
\[\tilde{U} = \{u \in U' \colon \rho(x,u) = \rho(z,u) = 0 \land (\forall i \in [s]) \alpha_i(x,u) = \alpha_i(u, x) = \alpha_i(z,u) = \alpha_i(u, z) = 0\}.\]
Provided $m > 4(s + 2) + 8(4s + 2)$, we may use Lemma~\ref{bilCountingLemma} one more time to find $(y,w) \in \tilde{U} \times \tilde{U}$ such that $\rho(y,w) = 0$, $\alpha_\ell(y,w) = 0$ when $\ell \not= j$ and $\alpha_j(y,w) = 1$, completing the proof.\end{proof}

Next, we use the second symmetry condition $\phi + \phi \circ (1\,\,3) = 0$ in a similar manner to prove the following claim. We remark that we make use of Claim~\ref{partialsymmetryClaim1} in the proof.

\begin{claim}\label{partialsymmetryClaim2}Assume $m \geq 100(s^2 + s + 1)$.
\begin{itemize}
\item[\textbf{(i)}] There exists a subspace $V^2 \leq V^1$ such that $\dim V^2 \geq \dim V^1 - 2s^2 - 5s$ and $\sum_{i \in [s]} \lambda'_{ij} \alpha_i$ is symmetric for all $j \in [s]$ on $V^2 \times V^2$.
\item[\textbf{(ii)}] For all $i,j \in [s]$, $\lambda_{ij} = \lambda''_{ji}$.
\end{itemize}\end{claim}

\begin{proof}The identity $\phi + \phi \circ (1\,\,3) = 0$ coupled with~\eqref{PhiDecomposition1} gives
\begin{align}0 = &\sum_{i,j \in [s]} (\lambda_{ij} \alpha_i(x,y) \alpha_j(z,w) + \lambda''_{ji} \alpha_i(y,x)\alpha_j(z,w)) + \sum_{i,j \in [s]} \lambda'_{ij} (\alpha_i(x,z) + \alpha_i(z,x))\alpha_j(y,w) \nonumber\\
&\hspace{2cm}+ \sum_{i,j \in [s]} (\lambda''_{ij} \alpha_i(x,w) \alpha_j(y,z) + \lambda_{ji} \alpha_i(x,w) \alpha_j(z,y))\nonumber\\
+&\rho(x,y) (\beta_1(z,w) + \beta_4(z,w)) + \rho(x,w) (\beta_3(y,z) + \beta_6(z,y)) + \rho(y,z) (\beta_4(x,w) + \beta_1(x,w))\nonumber\\
&\hspace{2cm} + \rho(y,w) (\beta_5(x,z) + \beta_5(z,x)) + \rho(z,w) (\beta_6(x,y) + \beta_3(y,x))\label{13decompositionsymmID}\end{align}
for all $x,y,z,w \in U'$.\\

\noindent\textbf{Proof of (i).} Let $j \in [s]$ be given. Similarly to the previous claim, we apply Lemma~\ref{bilCountingLemma} to find $y,w \in V^1$ such that $\rho(y,w) = 0$, $\alpha_i(y,w) = 0$ for $i \not= j$ and $\alpha_j(y,w) = 1$. Identity~\eqref{13decompositionsymmID} for this choice of $y,w$ and Lemma~\ref{rankscharlemma}\textbf{(iii)} show that the rank of the bilinear map $\Big(\sum_{i \in [s]}\lambda'_{ij} (\alpha_i|_{V^1 \times V^1}(x,z) + \alpha_i|_{V^1 \times V^1}(z,x))\Big)$ is at most $2s + 5$. Lemma~\ref{lowranktozero} provides us with a subspace $V^2_j \leq V^1$ with $\dim V_j^2 \geq V^1 - 2s - 5$ on which $\sum_{i \in [s]}\lambda'_{ij} \alpha_i$ is symmetric. To finish the proof, take $V^2 = \cap_{j \in [s]} V^2_j$.\\

\noindent\textbf{Proof of (ii).} Using the part \textbf{(i)} of Claim~\ref{partialsymmetryClaim1} we see that whenever $x,y,z,w \in V^1$ 
\begin{align*}\sum_{i,j \in [s]} (\lambda_{ij} \alpha_i(x,y) \alpha_j(z,w) + \lambda''_{ji} \alpha_i(y,x)\alpha_j(z,w)) = &\sum_{j \in [s]} \Big(\sum_{i \in [s]} \lambda_{ij} \alpha_i(x,y)\Big) \alpha_j(z,w) + \sum_{i,j \in [s]} \lambda''_{ji} \alpha_i(y,x)\alpha_j(z,w)\\
= &\sum_{j \in [s]} \Big(\sum_{i \in [s]} \lambda_{ij} \alpha_i(y,x)\Big) \alpha_j(z,w) + \sum_{i,j \in [s]} \lambda''_{ji} \alpha_i(y,x)\alpha_j(z,w)\\
= &\sum_{i,j \in [s]} (\lambda_{ij} + \lambda''_{ji}) \alpha_i(y,x)\alpha_j(z,w).\end{align*}
From~\eqref{13decompositionsymmID} we obtain
\begin{align}0 = &\sum_{i,j \in [s]} (\lambda_{ij} + \lambda''_{ji}) \alpha_i(y,x)\alpha_j(z,w) + \sum_{i,j \in [s]} \lambda'_{ij} (\alpha_i(x,z) + \alpha_i(z,x))\alpha_j(y,w) \nonumber\\
&\hspace{2cm}+ \sum_{i,j \in [s]} (\lambda''_{ij} \alpha_i(x,w) \alpha_j(y,z) + \lambda_{ji} \alpha_i(x,w) \alpha_j(z,y))\nonumber\\
+&\rho(x,y) (\beta_1(z,w) + \beta_4(z,w)) + \rho(x,w) (\beta_3(y,z) + \beta_6(z,y)) + \rho(y,z) (\beta_4(x,w) + \beta_1(x,w))\nonumber\\
&\hspace{2cm} + \rho(y,w) (\beta_5(x,z) + \beta_5(z,x)) + \rho(z,w) (\beta_6(x,y) + \beta_3(y,x))\label{13decompositionsymmIDsecond}\end{align} 
for all $x,y,z,w \in V^1$. We now proceed as in the proof of Claim~\ref{partialsymmetryClaim1}\textbf{(ii)}. Let $i, j \in [s]$ be given. Since $m \geq 100(s^2 + s + 1)$, we may find elements $x,y,z,w \in V^1$ such that $\alpha_1, \dots, \alpha_r$ and $\rho$ are equal to 0 at all 6 points in the set $\{(y,x), (x,z), (x,w), (y,z), (y,w), (z,w)\}$, with the two exceptions being $\alpha_i(y,x) = \alpha_j(z,w) = 1$. (Note that this time we use the point $(y,x)$ instead of $(x,y)$.) Evaluating~\eqref{13decompositionsymmIDsecond} at $(x,y,z,w)$ gives $\lambda_{ij} = \lambda''_{ji}$.\end{proof}

It remains to use the third symmetry condition $\phi + \phi \circ (1\,\,4) = \rho(x,y) \rho(z,w) + \rho(x,z) \rho(y,w)$.

\begin{claim}\label{partialsymmetryClaim3}Assume $m \geq 200(s^2 + s + 1)$. Then $\lambda_{ij} = \lambda'_{ji}$ for all $i,j \in [s]$.\end{claim}

\begin{proof}The identity $\phi + \phi \circ (1\,\,4) = \rho(x,y) \rho(z,w) + \rho(x,z) \rho(y,w)$ implies that
\begin{align}0=&\sum_{i,j \in [s]} (\lambda_{ij} \alpha_i(x,y) \alpha_j(z,w) + \lambda'_{ji}  \alpha_i(y,x)\alpha_j(w,z)) + \sum_{i,j \in [s]} (\lambda'_{ij} \alpha_i(x,z) \alpha_j(y,w) + \lambda_{ji} \alpha_i(z,x)\alpha_j(w,y))\nonumber\\
&\hspace{2cm} + \sum_{i,j \in [s]} \lambda''_{ij} (\alpha_i(x,w) + \alpha_i(w,x)) \alpha_j(y,z)\nonumber\\
&+\rho(x,y) (\rho(z,w) + \beta_1(z,w)+ \beta_5(w,z)) + \rho(x,z) (\rho(y,w)  + \beta_2(y,w) + \beta_6(w,y))\nonumber\\
&\hspace{2cm}+ \rho(y,z) (\beta_4(x,w) + \beta_4(w,x)) + \rho(y,w) (\beta_5(x,z) + \beta_1(z,x)) + \rho(z,w)(\beta_6(x,y) + \beta_2(y,x))\label{14decompositionsymmID}\end{align}
for all $x,y,z,w \in U'$. Using Claim~\ref{partialsymmetryClaim1}\textbf{(i)} and Claim~\ref{partialsymmetryClaim2}\textbf{(i)} we see that whenever $x,y,z,w \in V^2$ 
\begin{align*}\sum_{i,j \in [s]} (\lambda_{ij} \alpha_i(x,y) \alpha_j(z,w) +& \lambda'_{ji}  \alpha_i(y,x)\alpha_j(w,z))\\
 = & \sum_{j \in [s]} \Big(\sum_{i \in [s]} \lambda_{ij} \alpha_i(x,y)\Big) \alpha_j(z,w) + \sum_{i \in [s]} \alpha_i(y,x)\Big(\sum_{j \in [s]}\lambda'_{ji} \alpha_j(w,z)\Big)\\
= & \sum_{j \in [s]} \Big(\sum_{i \in [s]} \lambda_{ij} \alpha_i(y,x)\Big) \alpha_j(z,w) + \sum_{i \in [s]} \alpha_i(y,x)\Big(\sum_{j \in [s]}\lambda'_{ji} \alpha_j(z,w)\Big)\\
= &\sum_{i,j \in [s]} (\lambda_{ij} + \lambda'_{ji}) \alpha_i(y,x)\alpha_j(z,w).\end{align*}

Using this identity, the expression in~\eqref{14decompositionsymmID} becomes
\begin{align*}0=&\sum_{i,j \in [s]} (\lambda_{ij} + \lambda'_{ji}) \alpha_i(y,x)\alpha_j(z,w) + \sum_{i,j \in [s]} (\lambda'_{ij} + \lambda_{ji}) \alpha_i(z,x)\alpha_j(y,w) + \sum_{i,j \in [s]} \lambda''_{ij} (\alpha_i(x,w) + \alpha_i(w,x)) \alpha_j(y,z)\nonumber\\
&+\rho(x,y) (\rho(z,w) + \beta_1(z,w)+ \beta_5(w,z)) + \rho(x,z) (\rho(y,w)  + \beta_2(y,w) + \beta_6(w,y))\nonumber\\
&\hspace{2cm}+ \rho(y,z) (\beta_4(x,w) + \beta_4(w,x)) + \rho(y,w) (\beta_5(x,z) + \beta_1(z,x)) + \rho(z,w)(\beta_6(x,y) + \beta_2(y,x))\label{14decompositionsymmIDalt}\end{align*}
for all $x,y,z,w \in V^2$. Let $i,j \in [s]$ be fixed now. We complete the argument as in the proofs of the previous two claims. Since $m$ is sufficiently large, using Lemma~\ref{bilCountingLemma} we may find elements $x,y,z,w \in V^2$ such that $\alpha_1, \dots, \alpha_s$ and $\rho$ are equal to 0 at all 6 points in the set $\{(y,x), (z,x), (w,x), (y,z), (y,w), (z,w)\}$, with the two exceptions being $\alpha_i(y,x) = \alpha_j(z,w) = 1$. It follows that $\lambda_{ij} = \lambda'_{ji}$, as required.\end{proof}

It follows from Claims~\ref{partialsymmetryClaim1}\textbf{(ii)},~\ref{partialsymmetryClaim2}\textbf{(ii)} and \ref{partialsymmetryClaim3} that 
\[\lambda_{ij} = \lambda'_{ji} = \lambda''_{ij} = \lambda_{ji}\]
so $\lambda_{ij} = \lambda'_{ij} = \lambda''_{ij}$ and $\lambda$ is symmetric. Thus, returning to equality~\eqref{PhiDecomposition1} we see that whenever $x,y,z,w \in U'$ then
\begin{align*}\phi(x,y,z,w) = \sigma(x,y,z,w) + \psi(x,y,z,w) &+ \rho(x,y) \beta_1(z,w) + \rho(x,z) \beta_2(y,w) + \rho(x,w) \beta_3(y,z)
\\&  + \rho(y,z) \beta_4(x,w) + \rho(y,w) \beta_5(x,z) + \rho(z,w) \beta_6(x,y)\end{align*}
where $\psi \colon U' \times  U' \times U' \times U' \to \mathbb{F}_2$ is the multilinear form defined as 
\[\psi(x,y,z,w) = \sum_{i,j \in [s]} \lambda_{ij} \alpha_i(x,y) \alpha_j(z,w) + \sum_{i,j \in [s]} \lambda_{ij} \alpha_i(x,z) \alpha_j(y,w) + \sum_{i,j \in [s]} \lambda_{ij} \alpha_i(x,w) \alpha_j(y,z).\]

\vspace{\baselineskip}

\begin{claim}Multilinear form $\psi|_{V^1 \times V^1 \times V^1 \times V^1}$ is symmetric.\end{claim}

\begin{proof} Before we proceed with the proof, we observe a few useful identities that hold for any $a,b,c,d \in V^1$. Using Claim~\ref{partialsymmetryClaim1}\textbf{(ii)} and the fact that $\lambda$ is symmetric we have
\begin{align}\sum_{i,j \in [s]} \lambda_{ij} \alpha_i(a,b)\alpha_j(c,d) = \sum_{j \in [s]} \Big(\sum_{i \in [s]} \lambda_{ij} \alpha_i(a,b)\Big)\alpha_j(c,d) = &\sum_{j \in [s]} \Big(\sum_{i \in [s]} \lambda_{ij} \alpha_i(b,a)\Big) \alpha_j(c,d)\nonumber\\
 = &\sum_{i,j \in [s]} \lambda_{ij} \alpha_i(b,a)\alpha_j(c,d)\label{abcdSym1ID}\end{align}
and
\begin{align}\sum_{i,j \in [s]} \lambda_{ij} \alpha_i(a,b)\alpha_j(c,d) = &\sum_{j,i \in [s]} \lambda_{ji} \alpha_j(a,b)\alpha_i(c,d) = \sum_{i,j \in [s]} \lambda_{ij} \alpha_i(c,d)\alpha_j(a,b).\label{abcdSym2ID}\end{align}

Additionally, from these two identities we deduce
\begin{equation}\sum_{i,j \in [s]} \lambda_{ij} \alpha_i(a,b)\alpha_j(c,d) = \sum_{i,j \in [s]} \lambda_{ij} \alpha_i(c,d)\alpha_j(a,b) = \sum_{i,j \in [s]} \lambda_{ij} \alpha_i(d,c)\alpha_j(a,b) = \sum_{i,j \in [s]} \lambda_{ij} \alpha_i(a,b)\alpha_j(d,c).\label{abcdSym3ID}\end{equation}

Returning to the claim, it suffices to prove $\psi = \psi \circ (1\,\,2) = \psi \circ (1\,\,3) = \psi \circ (1\,\,4)$ on $V^1 \times V^1 \times V^1 \times V^1$. Fix $x,y,z,w \in V^1$. Using the idenities~\eqref{abcdSym1ID},~\eqref{abcdSym2ID} and~\eqref{abcdSym3ID} above we obtain
\begin{align*}\psi(y,x,z,w) = &\sum_{i,j \in [s]} \lambda_{ij} \alpha_i(y,x) \alpha_j(z,w) + \sum_{i,j \in [s]} \lambda_{ij} \alpha_i(y,z) \alpha_j(x,w) + \sum_{i,j \in [s]} \lambda_{ij} \alpha_i(y,w) \alpha_j(x,z)\\
= &\sum_{i,j \in [s]} \lambda_{ij} \alpha_i(x,y) \alpha_j(z,w) + \sum_{i,j \in [s]} \lambda_{ij} \alpha_i(x,w)\alpha_j(y,z) + \sum_{i,j \in [s]} \lambda_{ij} \alpha_i(x,z)\alpha_j(y,w)\\
= &\sum_{i,j \in [s]} \lambda_{ij} \alpha_i(x,y) \alpha_j(z,w)+ \sum_{i,j \in [s]} \lambda_{ij} \alpha_i(x,z)\alpha_j(y,w) + \sum_{i,j \in [s]} \lambda_{ij} \alpha_i(x,w)\alpha_j(y,z)\\
= & \psi(x, y,z,w).\end{align*}

\noindent Next, we have
\begin{align*}\psi(z,y,x,w) =&\sum_{i,j \in [s]} \lambda_{ij} \alpha_i(z,y) \alpha_j(x,w) + \sum_{i,j \in [s]} \lambda_{ij} \alpha_i(z,x) \alpha_j(y,w) + \sum_{i,j \in [s]} \lambda_{ij} \alpha_i(z,w) \alpha_j(y,x)\\
=& \sum_{i,j \in [s]} \lambda_{ij} \alpha_i(y,z) \alpha_j(x,w) + \sum_{i,j \in [s]} \lambda_{ij} \alpha_i(x,z) \alpha_j(y,w) + \sum_{i,j \in [s]} \lambda_{ij} \alpha_i(z,w) \alpha_j(x,y)\\
=&\sum_{i,j \in [s]} \lambda_{ij} \alpha_i(x,w)\alpha_j(y,z) + \sum_{i,j \in [s]} \lambda_{ij} \alpha_i(x,z) \alpha_j(y,w) + \sum_{i,j \in [s]} \lambda_{ij} \alpha_i(x,y)\alpha_j(z,w)\\
=&\sum_{i,j \in [s]} \lambda_{ij} \alpha_i(x,y)\alpha_j(z,w) + \sum_{i,j \in [s]} \lambda_{ij} \alpha_i(x,z) \alpha_j(y,w) + \sum_{i,j \in [s]} \lambda_{ij} \alpha_i(x,w)\alpha_j(y,z)\\
= & \psi(x, y,z,w).\end{align*}

\noindent Finally, we have 
\begin{align*}\psi(w,y,z,x) =&\sum_{i,j \in [s]} \lambda_{ij} \alpha_i(w,y) \alpha_j(z,x) + \sum_{i,j \in [s]} \lambda_{ij} \alpha_i(w,z) \alpha_j(y,x) + \sum_{i,j \in [s]} \lambda_{ij} \alpha_i(w,x) \alpha_j(y,z)\\
= &\sum_{i,j \in [s]} \lambda_{ij} \alpha_i(y,w) \alpha_j(x,z) + \sum_{i,j \in [s]} \lambda_{ij} \alpha_i(z,w) \alpha_j(x,y) + \sum_{i,j \in [s]} \lambda_{ij} \alpha_i(x,w) \alpha_j(y,z)\\
= &\sum_{i,j \in [s]} \lambda_{ij}  \alpha_i(x,z)\alpha_j(y,w) + \sum_{i,j \in [s]} \lambda_{ij} \alpha_i(x,y)\alpha_j(z,w)  + \sum_{i,j \in [s]} \lambda_{ij} \alpha_i(x,w) \alpha_j(y,z)\\
=  &\sum_{i,j \in [s]} \lambda_{ij} \alpha_i(x,y)\alpha_j(z,w)  + \sum_{i,j \in [s]} \lambda_{ij}  \alpha_i(x,z)\alpha_j(y,w) + \sum_{i,j \in [s]} \lambda_{ij} \alpha_i(x,w) \alpha_j(y,z)\\
= & \psi(x, y,z,w).\qedhere\end{align*}
\end{proof}

Thus, we may consume $\psi$ into $\sigma$ and without loss of generality $\phi$ takes the shape
\begin{align}\phi(x,y,z,w) = \sigma(x,y,z,w) + &\rho(x,y) \beta_1(z,w) + \rho(x,z) \beta_2(y,w) + \rho(x,w) \beta_3(y,z)\nonumber\\ 
+ &\rho(y,z) \beta_4(x,w) + \rho(y,w) \beta_5(x,z) + \rho(z,w) \beta_6(x,y)\label{PhiDecomposition2}\end{align}
for $x,y,z,w \in V^1$.\\

We use the symmetry properties of $\phi$ for the second time in order to elucidate the structure of forms $\beta_1, \dots, \beta_6$. Equality $\phi = \phi \circ (1\,\,2)$ implies that
\begin{align*}0 =& \phi(x,y,z,w) + \phi(y,x,z,w) \\
= &\rho(x,z) (\beta_2(y,w) + \beta_4(y,w))+ \rho(x,w) (\beta_3(y,z) + \beta_5(y,z)) + \rho(y,z) (\beta_4(x,w) + \beta_2(x,w))\\
 &\hspace{2cm} + \rho(y,w) (\beta_5(x,z) + \beta_3(x,z)) + \rho(z,w) (\beta_6(x,y) + \beta_6(y,x))\end{align*}
for $x,y,z,w \in V^1$. If we fix $(z,w) \in V^1 \times V^1$ such that $\rho(z,w) = 1$ (such a point exists by Lemma~\ref{bilCountingLemma} if $m \geq 100(s^2 + s + 1)$), we see that the rank of the map $(x,y) \mapsto \beta_6(x,y) + \beta_6(y,x)$ on $V^1 \times V^1$ is at most 4. Similarly, if we fix $(x,z) \in V^1 \times V^1$ such that $\rho(x,z) = 1$ (respectively $(x,w) \in V^1 \times V^1$ such that $\rho(x,w) = 1$), we obtain that the rank of $\beta_2 + \beta_4 + \mu \rho$ for scalar $\mu = \beta_3(x,z) + \beta_5(x,z)$ (respectively $\beta_3 + \beta_5 + \mu' \rho$ for scalar $\mu' = \beta_2(x,w) + \beta_4(x,w)$) is at most 3 on $V^1 \times V^1$. Using Lemma~\ref{lowranktozero} we find a suitable subspace $W^1 \leq V^1$ of $\dim W^1 \geq \dim V^1 - 10$ such that $\beta_6|_{W^1 \times W^1}$ is symmetric and
\begin{equation}\beta_2|_{W^1 \times W^1} + \beta_4|_{W^1 \times W^1}, \beta_3|_{W^1 \times W^1} + \beta_5|_{W^1 \times W^1} \in \langle \rho|_{W^1 \times W^1} \rangle.\label{betaRelationship1}\end{equation}

Proceeding further, equality $\phi = \phi \circ (1\,\,3)$ and the fact that $\beta_6|_{W^1 \times W^1}$ is symmetric imply that for $x,y,z,w \in W^1$ we have
\begin{align*}0 =& \phi(x,y,z,w) + \phi(z,y,x,w) \\
= & \rho(x,y) (\beta_1(z,w) + \beta_4(z,w)) + \rho(x,w) (\beta_3(y,z) + \beta_6(z,y)) + \rho(y,z) (\beta_4(x,w) + \beta_1(x,w))\\
&\hspace{2cm}+ \rho(y,w) (\beta_5(x,z) + \beta_5(z,x)) + \rho(z,w) (\beta_6(y,x) + \beta_3(x,y))\\
= & \rho(x,y) (\beta_1(z,w) + \beta_4(z,w)) + \rho(x,w) (\beta_3(y,z) + \beta_6(y,z)) + \rho(y,z) (\beta_4(x,w) + \beta_1(x,w))\\
&\hspace{2cm}+ \rho(y,w) (\beta_5(x,z) + \beta_5(z,x)) + \rho(z,w) (\beta_6(x,y) + \beta_3(x,y)).\end{align*}
Again, fixing suitable pairs of $x,y,z,w \in W^1$ with $\rho = 1$ (using Lemma~\ref{bilCountingLemma}, assuming $m \geq 200(s^2 + s + 1)$), we see that the ranks of $(x,z) \mapsto \beta_5(x,z) + \beta_5(z,x)$, $\beta_1 + \beta_4 + \mu\rho$ and $\beta_3 + \beta_6 + \mu'\rho$ for suitable scalars $\mu, \mu' \in \mathbb{F}_2$ are at most 4,3 and 3 respectively. After passing to a suitable subspace $W^2 \leq W^1$ of $\dim W^2 \geq \dim W^1 - 10$ using Lemma~\ref{lowranktozero}, we obtain that $\beta_5|_{W^2 \times W^2}$ is symmetric and 
\begin{equation}\beta_1|_{W^2 \times W^2} + \beta_4|_{W^2 \times W^2}, \beta_3|_{W^2 \times W^2} + \beta_6|_{W^2 \times W^2} \in \langle \rho|_{W^2 \times W^2} \rangle.\label{betaRelationship2}\end{equation}

Finally, equality $\phi(x,y,z,w) = \phi(w,y,z,x) + \rho(x,y) \rho(z,w) + \rho(x,z) \rho(y,w)$ and symmetry of maps $\beta_6|_{W^2 \times W^2}$ and $\beta_5|_{W^2 \times W^2}$ give
\begin{align*}&\rho(x,y) \rho(z,w) + \rho(x,z) \rho(y,w) = \phi(x,y,z,w) + \phi(w,y,z,x)\\
&\hspace{2cm}=\rho(x,y) (\beta_1(z,w) + \beta_5(w,z)) + \rho(x,z)(\beta_2(y,w)+\beta_6(w,y)) + \rho(y,z)(\beta_4(x,w) + \beta_4(w,x))\\
&\hspace{4cm}+ \rho(y,w) (\beta_5(x,z) + \beta_1(z, x)) + \rho(z,w) (\beta_6(x,y) + \beta_2(y,x))\\
&\hspace{2cm}=\rho(x,y) (\beta_1(z,w) + \beta_5(z,w)) + \rho(x,z)(\beta_2(y,w)+\beta_6(y,w)) + \rho(y,z)(\beta_4(x,w) + \beta_4(w,x))\\
&\hspace{4cm}+ \rho(y,w) (\beta_5(z,x) + \beta_1(z, x)) + \rho(z,w) (\beta_6(y,x) + \beta_2(y,x))\end{align*}
for all $x,y,z,w \in W^2$. Applying the same argument for the third time, after fixing suitable pairs of $x,y,z,w \in W^1$ with $\rho = 1$ (using Lemma~\ref{bilCountingLemma}, this time assuming $m \geq 300(s^2 + s + 1)$) we see that the ranks of $(x,w) \mapsto \beta_4(x,w) + \beta_4(w,x)$, $\beta_1 + \beta_5 + \mu\rho$ and $\beta_2 + \beta_6 + \mu'\rho$ for suitable scalars $\mu, \mu' \in \mathbb{F}_2$ are at most 4,3 and 3 respectively. We thus find a subspace $W^3 \leq W^2$ of $\dim W^3 \geq \dim W^2 - 10$ using Lemma~\ref{lowranktozero}, such that $\beta_4|_{W^3 \times W^3}$ is symmetric and
\begin{equation}\beta_1|_{W^3 \times W^3} + \beta_5|_{W^3 \times W^3}, \beta_2|_{W^3 \times W^3} + \beta_6|_{W^3 \times W^3} \in \langle \rho|_{W^3 \times W^3} \rangle.\label{betaRelationship3}\end{equation}
\vspace{\baselineskip}

Using facts~\eqref{betaRelationship1},~\eqref{betaRelationship2} and~\eqref{betaRelationship3}, writing $\beta = \beta_6$ and restricting all maps to the product $W^3 \times W^3$, we obtain $\beta_2, \beta_3 \in \beta + \langle \rho \rangle$, from which we further get $\beta_4 \in \beta_2 + \langle \rho \rangle = \beta + \langle \rho \rangle$ and $\beta_5 \in \beta_3 + \langle \rho \rangle = \beta + \langle \rho \rangle$, which finally implies $\beta_1 \in \beta_4 + \langle \rho \rangle = \beta + \langle \rho \rangle$. Hence, all $\beta_i$ belong to $\beta + \langle \rho \rangle$. In conclusion, we obtain scalars $\lambda_1, \lambda_2, \lambda_3 \in \mathbb{F}_2$ such that
\begin{align}\phi(x,y,z,w) = \sigma(x,y,z,w) + &\rho(x,y) \beta(z,w) + \rho(x,z) \beta(y,w) + \rho(x,w) \beta(y,z)\nonumber\\ 
+ &\rho(y,z) \beta(x,w) + \rho(y,w) \beta(x,z) + \rho(z,w) \beta(x,y)\nonumber\\
+ &\lambda_1\rho(x,y)\rho(z,w) + \lambda_2\rho(x,z)\rho(y,w) + \lambda_3\rho(x,w)\rho(y,z)\nonumber\end{align}
holds for all $x,y,z,w \in W^3$.\\

Since $\rho$ and $\beta$ are symmetric on $W^3 \times W^3$, the multilinear form 
\[\rho(x,y) \beta(z,w) + \rho(x,z) \beta(y,w) + \rho(x,w) \beta(y,z) + \rho(y,z) \beta(x,w) + \rho(y,w) \beta(x,z) + \rho(z,w) \beta(x,y)\]
is readily seen to be symmetric, so it can be consumed into $\sigma$, and on $W^3 \times W^3 \times W^3 \times W^3$ we may assume that
\begin{align}\phi(x,y,z,w) = \sigma(x,y,z,w) + \lambda_1\rho(x,y)\rho(z,w) + \lambda_2\rho(x,z)\rho(y,w) + \lambda_3\rho(x,w)\rho(y,z).\label{PhiDecomposition3}\end{align}
\vspace{\baselineskip}

Let us use the symmetry conditions for $\phi$ for the final time in order to understand the relationship between scalars $\lambda_1, \lambda_2$ and $\lambda_3$. Note that the codimension of the subspace $W^3$ inside $U'$ is at most $2s^2 + 5s + 30$, so by Lemma~\ref{rankdecsub} we have $\on{rank} \rho|_{W^3 \times W^3} \geq 2$ (assuming $m \geq 100(s^2 + s+ 1)$). The equality $\phi = \phi \circ (1\,\,2)$ gives
\[(\lambda_2 + \lambda_3)(\rho(x,z)\rho(y,w) + \rho(x,w)\rho(y,z)) = 0\]
for all $x,y,z,w \in W^3$. Since $\on{rank} \rho|_{W^3 \times W^3} \geq 2$, the map $\rho|_{W^3 \times W^3}$ is non-zero, so we can find $(y,w) \in W^3 \times W^3$ such that $\rho(y,w) = 1$. The equality above then shows that $(\lambda_2 + \lambda_3)\rho|_{W^3 \times W^3}$ has rank at most 1. However, using the fact that the rank of $\rho|_{W^3 \times W^3}$ is at least 2 again, we deduce $\lambda_2 = \lambda_3$.\\
\indent Similarly, the condition $\phi = \phi \circ (1\,\,3)$ implies 
\[(\lambda_1 + \lambda_3)(\rho(x,y)\rho(z,w) + \rho(x,w)\rho(y,z)) = 0\]
for all $x,y,z,w \in W^3$. Using the same rank bound $\on{rank} \rho|_{W^3 \times W^3} \geq 2$ allows us to deduce $\lambda_1 = \lambda_3$.\\
\indent Putting these equalities together we see that $\lambda_1 = \lambda_2 = \lambda_3$ and equality~\eqref{PhiDecomposition3} then becomes
\[\phi(x,y,z,w) = \sigma(x,y,z,w) + \lambda_1\Big(\rho(x,y)\rho(z,w) + \rho(x,z)\rho(y,w) + \rho(x,w)\rho(y,z)\Big)\]
so $\phi = \phi \circ (1\,\,4)$ on $W^3 \times W^3$. The third symmetry condition tells us that
\[\rho(x,y) \rho(z,w) + \rho(x,z) \rho(y,w) = 0\]
for all $x,y,z,w \in W^3$, which is a contradiction as the rank of $\rho|_{W^3 \times W^3}$ is at least 2. To complete the proof of the theorem it remains to choose $m = 20000(r^2 + r + 1)$ (recall that $s \leq 6r$) so that all conditions on $m$ are satisfied.\end{proof}

\thebibliography{99}
\bibitem{BergelsonTaoZiegler} V. Bergelson, T. Tao and T. Ziegler, \emph{An inverse theorem for the uniformity seminorms associated with the action of $\mathbb{F}^{\infty}_p$}, Geometric and Functional Analysis \textbf{19} (2010), 1539--1596.
\bibitem{CamSzeg} O.A. Camarena and B. Szegedy, \emph{Nilspaces, nilmanifolds and their morphisms}, arXiv preprint (2010), \verb+arXiv:1009.3825+.
\bibitem{CandGonzSzeg} P. Candela, D. Gonz\'alez-S\'anchez, B. Szegedy, \emph{On higher-order Fourier analysis in characteristic $p$}, arXiv preprint (2021), \verb+arXiv:2109.15281+.
\bibitem{StrongU3} B. Green and T. Tao, \emph{An inverse theorem for the Gowers $U^3(G)$-norm}, Proceedings of the Edinburgh Mathematical Society \textbf{51} (2008), 73--153.
\bibitem{StrongUkZ} B. Green, T. Tao and T. Ziegler, \emph{An inverse theorem for the Gowers $U^{s+1}[N]$-norm}, Annals of Mathematics \textbf{176} (2012), 1231--1372.
\bibitem{TimSze} W.T. Gowers, \emph{A new proof of Szemer\'edi's theorem}, Geometric and Functional Analysis \textbf{11} (2001), 465--588.
\bibitem{U4paper} W.T. Gowers and L. Mili\'cevi\'c, \emph{A quantitative inverse theorem for the $U^4$ norm over finite fields}, arXiv preprint (2017), \verb+arXiv:1712.00241+.
\bibitem{genPaper} W.T. Gowers and L. Mili\'cevi\'c, \emph{An inverse theorem for Freiman multi-homomorphisms}, arXiv preprint (2020), \verb+arXiv:2002.11667+.
\bibitem{GMV1} Y. Gutman, F. Manners and P. Varj\'u, \emph{The structure theory of Nilspaces I}, Journal d'Analyse Math\'ematique \textbf{140} (2020), 299--369.
\bibitem{GMV2} Y. Gutman, F. Manners and P. Varj\'u, \emph{The structure theory of Nilspaces II: Representation as nilmanifolds}, Transactions of the American Mathematical Society \textbf{371} (2019), 4951--4992.
\bibitem{GMV3} Y. Gutman, F. Manners and P. Varj\'u, \emph{The structure theory of Nilspaces III: Inverse limit representations and topological dynamics}, Advances in Mathematics \textbf{365} (2020), 107059.
\bibitem{U3AsgarTao}, A. Jamneshan and T. Tao, \emph{The inverse theorem for the $U^3$ Gowers uniformity norm on arbitrary finite abelian groups: Fourier-analytic and ergodic approaches}, arXiv preprint (2021), \verb+arXiv:2112.13759+.
\bibitem{Janzer2} O. Janzer, \emph{Polynomial bound for the partition rank vs the analytic rank of tensors}, Discrete Analysis, paper no. 7 (2020), 1--18.
\bibitem{Manners} F. Manners, \emph{Quantitative bounds in the inverse theorem for the Gowers $U^{s+1}$-norms over cyclic groups}, arXiv preprint (2018), \verb+arXiv:1811.00718+.
\bibitem{LukaRank} L. Mili\'cevi\'c, \emph{Polynomial bound for partition rank in terms of analytic rank}, Geometric and Functional Analysis \textbf{29} (2019), 1503--1530.
\bibitem{LukaF2} L. Mili\'cevi\'c, \emph{Quantitative inverse theorem for $\mathsf{U}^k$ norm in $\mathbb{F}_2^n$}, \emph{in preparation}.
\bibitem{Naslund}  E. Naslund, \emph{The partition rank of a tensor and $k$-right corners in $\mathbb{F}_q^n$}, Journal of Combinatorial Theory, Series A \textbf{174} (2020), article 105190.
\bibitem{SamorU3} A. Samorodnitsky, \emph{Low-degree tests at large distances} in \emph{STOC'07-Proceedings of the 39\tss{th} Annual ACM Symposium on Theory of Computing}, ACM, New York (2007), 506--515.
\bibitem{Szeg} B. Szegedy, \emph{On higher order Fourier analysis}, arXiv preprint (2012), \verb+arXiv:1203.2260+.
\bibitem{TaoZiegler} T. Tao and T. Ziegler, \emph{The inverse conjecture for the Gowers norm over finite fields in low characteristic}, Annals of Combinatorics \textbf{16} (2012), 121--188.
\bibitem{Tidor} J. Tidor, \emph{Quantitative bounds for the $U^4$-inverse theorem over low characteristic finite fields}, arXiv preprint (2021), \verb+arXiv:2109.13108+.

\end{document}